\newtheorem{theorem}{Theorem}[section]
\newtheorem*{theoremp}{Theorem}
\newtheorem{lemma}[theorem]{Lemma}
\newtheorem{claim}[theorem]{Claim}
\newtheorem{corollary}[theorem]{Corollary}
\newtheorem{conjecture}[theorem]{Conjecture}
\newtheorem{question}[theorem]{Question}
\newcommand{\rr}{\mathds{R}}
\newcommand{\C}{\mathcal{C}}
\newcommand{\G}{\mathcal{G}}
\newcommand{\ff}{\mathcal{F}}
\newcommand{\ttt}{{\mathcal T}}
\def\rr{\mathds{R}}
\DeclareMathOperator{\conv}{conv}
\DeclareMathOperator{\diam}{diam}
\DeclareMathOperator{\vol}{vol}
\title{Helly-type theorems for the diameter}
\author{Pablo Sober\'on}
\begin{document}

\maketitle

\begin{abstract}
	We study versions of Helly's theorem that guarantee that the intersection of a family of convex sets in $R^d$ has a large diameter.  This includes colourful, fractional and $(p,q)$ versions of Helly's theorem.  In particular, the fractional and $(p,q)$ versions work with conditions where the corresponding Helly theorem does not.  We also include variants of Tverberg's theorem, B\'ar\'any's point selection theorem and the existence of weak epsilon-nets for convex sets with diameter estimates.
\end{abstract}

\section{Introduction}

Quantitative results in combinatorial geometry have recently caught new interest.  Those surrounding Helly's theorem have as aim to show that \textit{given a finite family of convex set in $\rr^d$, if the intersection of every small subfamily is large, then the intersection of the whole family is also large} \cite{Amenta:2015tp}.  When the size of a convex set is measured according to a function that varies discretely, such as the number of points in a lattice, there are very sharp results (e.g. \cite{Aliev:2014va}).  However, when the size of a convex set is measured according to a function that varies continuously, such as the volume or diameter, the behaviour changes considerably.

The first results of this kind were presented by B\'ar\'any, Katchalski and Pach \cite{Barany:1982ga, Barany:1984ed}, where Helly-type theorems were made regarding the volume and diameter of the intersection of families of convex sets.  They showed that, given a finite family of convex sets in $\rr^d$, if the intersection of every $2d$ of them has volume at least one, one can obtain lower bounds on the volume of the intersection, and the same holds for the diameter.  The constant $2d$ is optimal, but the downside is that the guarantee of the diameter or volume of the intersection decreases quickly with the dimension.

Helly's theorem has an impressive number of variations and generalisations (see, for instance, the surveys \cite{Danzer:1963ug,Eckhoff:1993uy, Matousek:2002td, Wenger:2004uf, Amenta:2015tp}).  Thus, it is natural to determine which results can be extended in this quantitative framework.  For the volume, several advances have been made in this direction \cite{Naszodi:2015vi, DeLoera:2015wp, Soberon:2015tsa}.  This includes optimising the original result by B\'ar\'any, Katchalski and Pach, and finding colourful versions, fractional versions and $(p,q)$ type theorems.  These are classical variations of Helly's theorem found in \cite{Barany:1982va}, \cite{Katchalski:1979bq} and \cite{Alon:1992gb}, respectively.  The aim of this paper is to present analogues to these results for the diameter.  

For example, the guarantee of the size of the intersection can be improved if we are willing to check larger families.  Regarding the diameter, the following result makes this clear.

\begin{theoremp}[Helly's theorem for diameter, De Loera et al. {\cite[Thm 1.5]{DeLoera:2015wp}}]
Let $d$ be a positive integer and $1>\delta>0$, then, there is an integer $n=n(\diam, d, \delta)$ such that for any finite family 	$\ff$ of convex sets in $\rr^d$, if the intersection of every subfamily of size $n$ has diameter greater than or equal to one, then $\diam (\cap \ff) \ge 1- \delta$.  Moreover, $n(\diam, d, \delta) = \Omega_d (\delta^{-(d-1)/2})$.
\end{theoremp}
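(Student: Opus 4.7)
The plan is to argue by contradiction: assume $\diam(\cap \ff) < 1 - \delta$ and construct a subfamily $\ff' \subseteq \ff$ of size $O_d(\delta^{-(d-1)/2})$ whose intersection still has diameter strictly less than one, contradicting the hypothesis once $n$ is taken at least that large. Write $K = \cap \ff$, which is non-empty (by classical Helly) provided $n > d$. The argument controls the support function $h_K$ direction by direction. Given $u \in S^{d-1}$ and a slack $\eta > 0$, I apply Helly's theorem to $\ff \cup \{H_u\}$, where $H_u = \{x : \langle u, x \rangle \geq h_K(u) + \eta\}$ is a closed half-space disjoint from $K$. Some $d+1$ members must have empty intersection, and since $K$ itself is non-empty this collection must contain $H_u$; the remaining $d$ sets $F_{u,1}, \ldots, F_{u,d} \in \ff$ then satisfy $h_{F_{u,1} \cap \cdots \cap F_{u,d}}(u) \leq h_K(u) + \eta$.

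I next choose an $\epsilon$-net $N \subseteq S^{d-1}$ of angular radius $\epsilon = c \sqrt{\delta}$ for a small constant $c$ to be fixed. Standard packing bounds give $|N| = O_d(\epsilon^{-(d-1)}) = O_d(\delta^{-(d-1)/2})$. For every $u \in N$ I pick witnessing sets for both $u$ and $-u$ at slack $\eta = \delta/8$, and let $\ff'$ be the union; then $|\ff'| \leq 2d|N|$. Writing $w_C(u) = h_C(u) + h_C(-u)$ for the width of a compact convex body $C$ in direction $u$, the intersection $K' := \cap \ff'$ contains $K$ and, by construction, satisfies $w_{K'}(u) \leq w_K(u) + 2\eta \leq (1 - \delta) + 2\eta$ for every $u \in N$.

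It remains to promote the net-wise control into a uniform bound on $\diam(K')$. Let $a, b \in K'$ attain $D := \diam(K')$ and set $v = (a-b)/D$. For any unit $u$, $w_{K'}(u) \geq \langle u, a \rangle - \langle u, b \rangle = D \cos \angle(u, v)$. Picking $u \in N$ within angle $\epsilon$ of $v$ gives $D \cos \epsilon \leq w_{K'}(u) < 1 - \delta + 2\eta$, and the estimate $\cos \epsilon \geq 1 - \epsilon^2/2$ combined with the choice $\epsilon^2/2 + 2\eta < \delta$ (achievable by taking $c$ small enough) forces $D < 1$, the desired contradiction. The exponent $(d-1)/2$ rather than $(d-1)$ comes precisely from the second-order cosine estimate, which lets a $\sqrt{\delta}$-net suffice in place of a $\delta$-net. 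I expect the main technical point to be setting up the direction-wise Helly witnesses cleanly; the rest is a balancing of $\eta$ against $\epsilon^2$.
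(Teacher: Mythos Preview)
Your argument is correct and yields the upper bound $n=O_d(\delta^{-(d-1)/2})$. Note, however, that the stated theorem is \emph{cited} in the paper rather than proved there, and its ``Moreover'' clause is a \emph{lower} bound $n=\Omega_d(\delta^{-(d-1)/2})$, which your proposal does not address at all. The paper's own contribution toward this statement is precisely the matching upper bound, obtained as the monochromatic specialisation $\ff_1=\cdots=\ff_{n'}$ of Theorem~\ref{theorem-colourful-diameter}.

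For that upper bound your route is essentially the same as the paper's proof of Theorem~\ref{theorem-colourful-diameter}: net $S^{d-1}$ by $O_d(\delta^{-(d-1)/2})$ directions at angular scale $\approx\sqrt{\delta}$, extract for each net direction a bounded-size subfamily controlling the width in that direction, take the union, and use the projection (cosine) estimate to pass from per-direction width control back to a diameter bound on the intersection. The one substantive difference is how the per-direction witnesses are produced. The paper first proves a Helly theorem for $v$-width (Theorem~\ref{theorem-v-width-basic-helly}) and invokes its contrapositive to obtain, for each net direction $v_j$, a single $2d$-tuple whose $v_j$-width is already below $1-\delta$; you instead apply classical Helly separately to the two supporting half-spaces, getting $d$ sets for $u$ and $d$ more for $-u$ at the cost of a slack $\eta$ that you later absorb into the $\epsilon^2$ budget. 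Both give $2d$ sets per direction and identical asymptotics; your version is slightly more elementary since it avoids the $v$-width Helly lemma, whereas the paper's packaging via Theorem~\ref{theorem-v-width-basic-helly} eliminates the $\eta$ bookkeeping and, more importantly, is reused verbatim in the colourful and fractional results that follow.
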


Given two functions $g(d,\delta)$ and $f(d,\delta)$, we say $g(d,\delta)=\Omega_d (f(\delta))$ if, for any fixed $d$, $g(d,\delta) = \Omega (f(\delta))$, and similarly with other notation for asymptotic bounds.  For an upper bound to the result above, one can apply the main theorem of \cite{Langberg:2009go} to get $n(\diam, d, \delta) = O(\delta^{-d/2})$.  The equivalent result for volume {\cite[Thm 1.4]{DeLoera:2015wp}} has similar upper and lower bounds in terms of $\delta$, giving $n(\vol, d, \delta)= \Theta_d (\delta^{-(d-1)/2})$.  In the same spirit as Lov\'asz's generalisation of Helly's theorem \cite{Barany:1982va}, we show a ``colourful version'' of De Loera et al.'s diameter Helly in Section \ref{section-colourful}.  Moreover, Theorem \ref{theorem-colourful-diameter} implies an asymptotically optimal bound for the diameter as well: $n(\diam, d, \delta) = \Theta_d (\delta^{-(d-1)/2})$.  Asymptotic result as above hold for a very general family of functions, and are closely related to the approximability of convex sets by polyhedra \cite{Amenta:2015tp}.

If we allow for a loss of diameter $\delta$, other versions of Helly's theorem can be recreated.  In particular, we show a version of Alon and Kleitman's $(p,q)$ theorem \cite{Alon:1992gb} for the diameter.  The $(p,q)$ theorem was conjectured originally by Hadwiger and Gr\"unbaum \cite{Hadwiger:1957we}, and asks if a slight weakening of Helly's condition is still enough information to bound the number of points needed to intersect all members of a finite family of convex sets in $\rr^d$.  A lucid description of the theorem and its variations is contained in a survey by Eckhoff \cite{Eckhoff:2003ed}, and more recent results are summarised in \cite{Amenta:2015tp}.

\begin{theorem}[$(p,q)$ theorem for diameter]\label{theorem-p,q-diameter}
Let $p \ge q \ge 2d$ be positive integers and $1 > \delta > 0$.  Then, there is a $c = c(p,q,d,\delta)$ such that for any finite family $\ff$ of at least $p$ convex sets in $\rr^d$ of diameter at least one each, if out of every $p$ sets in $\ff$, there are $q$ of them whose intersection has diameter at least one, then we can find $c$ convex sets $K_1, K_2, \ldots, K_c$ of diameter at least $1-\delta$ such that every set in $\ff$ contains at least one $K_i$.
\end{theorem}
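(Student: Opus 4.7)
\emph{Proof strategy.} My plan is to adapt the Alon--Kleitman proof of the classical $(p,q)$-theorem, replacing each ingredient by its diameter analogue. Throughout, the role of ``a point lying in many sets of $\ff$'' will be played by ``a convex set of diameter at least $1-\delta$ contained in the intersection of many sets of $\ff$''. The three building blocks I will assume (and which this paper is evidently developing in its other sections) are: a fractional Helly theorem for diameter (obtainable from the colourful Helly theorem for diameter, Theorem~\ref{theorem-colourful-diameter}, by the standard reduction of B\'ar\'any), a weak $\epsilon$-net theorem whose nets consist of convex sets of large diameter, and Matou\v{s}ek's LP-duality / iterative reweighting machinery.

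First, the $(p,q)$ hypothesis feeds into the usual Alon--Kleitman double-counting argument, which yields a constant $\alpha=\alpha(p,q)>0$ so that an $\alpha$-fraction of all $q$-subfamilies of $\ff$ has intersection of diameter at least one. I would then apply the fractional Helly theorem for diameter with tolerance $\delta/3$, producing a constant $\beta=\beta(p,q,d,\delta)>0$ and a subfamily $\G\subseteq\ff$ with $|\G|\ge \beta|\ff|$ such that $\diam(\cap\,\G)\ge 1-\delta/3$. The key point is that the fractional Helly condition can be checked at the level of $q$-tuples rather than $(d+1)$-tuples, which is compatible with the hypothesis of the theorem.

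Next, I would invoke a weak $\epsilon$-net theorem for diameter (with $\epsilon=\beta$ and tolerance $\delta/3$) to produce a family $\mathcal{N}$ of at most $N(\beta,d,\delta)$ convex sets, each of diameter at least $1-\delta$, with the property that whenever a subfamily of $\ff$ of relative size at least $\beta$ has intersection of diameter at least $1-\delta/3$, that intersection contains some $C\in\mathcal{N}$. Applying this to the subfamily $\G$ obtained above gives one $C\in\mathcal{N}$ lying in the intersection of at least $\beta|\ff|$ members of $\ff$. Running the same argument with arbitrary positive weights on $\ff$ (rather than uniform weights) shows that the \emph{fractional} covering number of $\ff$ by elements of $\mathcal{N}$ is bounded by $1/\beta$, independently of $|\ff|$.

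Finally, to pass from a bounded fractional covering number to a bounded integer one, I would apply Matou\v{s}ek's LP-rounding argument exactly as in the original proof of the $(p,q)$-theorem: the fact that the set system $(\ff,\mathcal{N})$ admits weak $\epsilon$-nets of bounded size is precisely what is needed for the rounding to succeed, and the $K_i$ of the statement are then chosen to be the selected elements of $\mathcal{N}$. The principal obstacle I anticipate is the second step: one must establish a weak $\epsilon$-net theorem in which the net is made of \emph{fat} convex sets, not points, while keeping the total diameter loss at $\delta$ after compounding the $\delta/3$ approximations at the fractional Helly and $\epsilon$-net stages. Provided that step is in hand, the rest of the argument is a near-verbatim transcription of Alon--Kleitman.
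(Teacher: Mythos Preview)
Your overall strategy is exactly the paper's: run Alon--Kleitman with diameter substitutes for the three standard ingredients (fractional Helly, weak $\varepsilon$-nets, LP duality). The paper packages these as two lemmas --- one bounding $\nu^*_{\delta/2}(\ff)$ via the fractional Helly for diameter (Theorem~\ref{theorem-fractional}, applied to a multiset as you suggest), the other bounding $\tau_\delta(\ff)$ in terms of $\tau^*_{\delta/2}(\ff)$ via the weak $\varepsilon$-net for diameter (Theorem~\ref{theorem-weak-nets}) --- and then invokes $\nu^*_{\delta/2}=\tau^*_{\delta/2}$. The paper uses a single $\delta/2$ split (via $(1-\delta/2)^2\ge 1-\delta$) rather than your $\delta/3$.

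The one place your sketch diverges and would cause trouble if you filled it in literally is the weak $\varepsilon$-net step. You describe a net $\mathcal{N}$ built from $\ff$ that catches \emph{intersections} of $\beta$-large subfamilies of $\ff$, and then try to run LP on the finite system $(\ff,\mathcal{N})$. But the weighted version of your fractional-Helly step produces a subfamily $\G'$ that is heavy in $w$-weight yet possibly tiny in cardinality, and a net built for the unweighted $\ff$ has no reason to pierce $\cap\G'$. The paper avoids this by applying the weak $\varepsilon$-net \emph{after} LP duality, to the (multiset coming from the) optimal fractional transversal $\ttt\subset\mathcal{C}_{d,\delta/2}$: the net $\mathcal{K}$ is built for $\ttt$, and since each $F\in\ff$ contains at least a $(\tau^*_{\delta/2})^{-1}$-fraction of $\ttt$, the net property for $\conv(\cup\,\cdot)$ (not $\cap$) immediately puts some $K_i\subset F$. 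So the correct formulation of the diameter weak $\varepsilon$-net is the ``convex hull of the union'' version (Theorem~\ref{theorem-weak-nets}), applied to a family of large-diameter witnesses rather than to $\ff$ itself. Once you reorder those two steps, your outline matches the paper's proof verbatim.
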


In the volumetric version \cite[Thm. 1.2]{Soberon:2015tsa}, the lower bound on $q$ depends on heavily on $\delta$.  The proof of the original $(p,q)$ theorem is a tour de force of combinatorial geometry, and requires many classic results.  In order to prove Theorem \ref{theorem-p,q-diameter}, we give diameter version of these as well.  Notice that if one stubbornly refuses to check large families as Helly's theorem for the diameter requires, the result above gives non-trivial consequences with the same condition as B\'ar\'any, Katchalski and Pach used.

\begin{corollary}\label{corollary-chido}
	Let $d$ be a positive integer and $1 > \delta > 0$.  Let $\ff$ be a finite family of convex sets in $\rr^d$ such that the intersection of every $2d$ of them has diameter greater than or equal to one.  Then, $\ff$ may be split into $c(2d,2d,d,\delta)$ parts such that the diameter of the intersection of each of them is at least $1-\delta$.
\end{corollary}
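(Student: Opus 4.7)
The plan is to deduce the corollary as an almost immediate consequence of Theorem \ref{theorem-p,q-diameter} applied with $p=q=2d$. Observe that the $(p,q)$-hypothesis in this case degenerates to the statement ``out of every $2d$ sets of $\ff$, all $2d$ of them have intersection of diameter at least one'', which is precisely the hypothesis of the corollary. Moreover, the additional requirement in Theorem \ref{theorem-p,q-diameter} that each individual set of $\ff$ have diameter at least one is automatic from the corollary's hypothesis (assuming $|\ff|\ge 2d$, which we may assume since otherwise we can simply let each set form its own class after checking the hypothesis is non-vacuous): any $F\in\ff$ contains the intersection of any $2d$-subfamily containing $F$, and that intersection already has diameter at least one.

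Applying Theorem \ref{theorem-p,q-diameter} therefore yields convex sets $K_1,\ldots,K_c$ of diameter at least $1-\delta$, with $c=c(2d,2d,d,\delta)$, such that every $F\in\ff$ contains at least one $K_i$. To convert this piercing-type statement into a partition of $\ff$, I would assign each $F\in\ff$ to the class $\ff_i$ where $i$ is the smallest index satisfying $K_i\subseteq F$. The resulting $\ff_1,\ldots,\ff_c$ form a partition of $\ff$ into at most $c$ nonempty classes, and for each nonempty $\ff_i$ every member contains $K_i$, so
\[
\bigcap \ff_i \;\supseteq\; K_i, \qquad \diam\Bigl(\bigcap \ff_i\Bigr) \;\ge\; \diam(K_i) \;\ge\; 1-\delta.
\]

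Since the only ingredient is Theorem \ref{theorem-p,q-diameter}, there is no genuine obstacle in this deduction; all the work has been packaged inside that theorem, and the corollary merely unpacks it to match the classical B\'ar\'any--Katchalski--Pach hypothesis of large $2d$-wise intersections. The only minor subtlety is the passage from a piercing by the $K_i$'s to a genuine partition of $\ff$, which the tie-breaking rule above handles trivially.
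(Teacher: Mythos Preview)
Your proposal is correct and follows exactly the route the paper intends: the corollary is stated immediately after Theorem~\ref{theorem-p,q-diameter} as a direct specialisation to $p=q=2d$, and the paper gives no further proof. You have simply spelled out the two small details the paper leaves implicit---that each $F\in\ff$ automatically has diameter at least one, and that the piercing family $K_1,\ldots,K_c$ induces a partition of $\ff$ via the obvious assignment rule.
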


  The loss of diameter $\delta$ is necessary in Theorem \ref{theorem-p,q-diameter} and Corollary \ref{corollary-chido}.  This is shown in section \ref{section-remarks} with a construction.
  
The rest of the paper is organised as follows.  In section \ref{section-width} we show Helly-type results for the property \textit{having $v$-width at least one}, for some fixed direction $v$.  In section \ref{section-colourful} we show a colourful version of Helly's theorem for the diameter.  In section \ref{section-fractional} we prove diameter versions of the fractional Helly theorem \cite{Katchalski:1979bq}, Tverberg's theorem \cite{Tverberg:1966tb}, B\'ar\'any's selection theorem (sometimes called the ``first selection lemma'') \cite{Barany:1982va} and the existence of weak $\varepsilon$-nets for convex sets \cite{Alon:2008ek} in order to prove Theorem \ref{theorem-p,q-diameter}.  Finally, in section \ref{section-remarks} we include some remarks and open problems.

\section{Results for fixed direction width}\label{section-width}

If instead of looking at the diameter, one is interested in the width  in a fixed direction $v$, we can obtain similar Helly-type to the ones mentioned in the introduction.  The original proofs for Helly-type theorems can be translated to this setting with minimal effort.  However, since some of them are useful for the results regarding the diameter, we present them completely here.

Let $v$ be a unit vector in $\rr^d$.  Given a compact convex set $K \subset \rr^d$, we say that $p \in K$ is a $v$-directional minimum if $\langle v, p \rangle \le \langle v, x\rangle$ for all $x \in K$, where $\langle \cdot, \cdot \rangle$ denotes the usual dot product.  We define a $v$-directional maximum similarly.  Throughout the rest of the paper we will assume that all convex sets we work with are compact and their boundary contains no segments.  This guarantees that the $v$-directional minimums for the sets and their non-empty finite intersections exist and are unique.  Standard approximation techniques show that there is no loss of generality.

Given a compact convex set $K$, we define its $v$-width as $\langle q, v\rangle - \langle p, v\rangle$ where $q, p$ are its $v$-directional maximum and $v$-direction minimum, respectively.

\begin{theorem}[Helly for $v$-width]\label{theorem-v-width-basic-helly}
	Let $v$ be a unit vector in $\rr^d$ and $\ff$ be a finite family of convex sets in $\rr^d$ such that the intersection of every $2d$ sets of $\ff$ has a $v$-width greater than or equal to one.  Then, the $v$-width of $\cap \ff$ is greater than or equal to one.
\end{theorem}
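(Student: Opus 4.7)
The plan is to reduce the statement to the hypothesis by producing a subfamily $\G \subseteq \ff$ of size at most $2d$ whose intersection has exactly the same $v$-width as $\cap\ff$; the hypothesis then forces this common $v$-width to be at least one.

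First I would verify that $\cap\ff$ is nonempty and set up the extremal points. The hypothesis guarantees nonempty $2d$-wise intersections, hence nonempty $(d+1)$-wise intersections, so classical Helly yields $\cap\ff\neq\emptyset$. Since the sets are compact, $\cap\ff$ admits a $v$-directional minimum $a$ and a $v$-directional maximum $b$, and its $v$-width is $\langle v, b-a\rangle$. Now I would locate a subfamily $\G_1\subseteq\ff$ of size at most $d$ such that the $v$-min of $\cap\G_1$ already equals $\langle v, a\rangle$. For each $\varepsilon>0$, I would augment $\ff$ with the half-space $H_\varepsilon=\{x:\langle v,x\rangle\le\langle v,a\rangle-\varepsilon\}$. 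Since $a$ is the $v$-min of $\cap\ff$, this augmented family has empty intersection, and Helly's theorem in $\rr^d$ produces $d+1$ sets with empty intersection; one of them must be $H_\varepsilon$, since otherwise $\cap\ff$ itself would be empty. Thus $d$ members of $\ff$ have an intersection whose $v$-min is at least $\langle v,a\rangle-\varepsilon$. Because $\ff$ has only finitely many $d$-subfamilies, pigeonhole on a sequence $\varepsilon_n\to 0$ produces a single subfamily $\G_1$ that works for every $\varepsilon_n$, and combined with $a\in\cap\G_1$ this gives the desired equality. An analogous argument produces $\G_2$ of size at most $d$ whose intersection has $v$-max equal to $\langle v,b\rangle$.

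Finally, I would set $\G:=\G_1\cup\G_2$, so $|\G|\le 2d$. The inclusion $\cap\G\subseteq\cap\G_1$ forces the $v$-min of $\cap\G$ to be at least $\langle v,a\rangle$, while $a\in\cap\ff\subseteq\cap\G$ forces it to be at most $\langle v,a\rangle$, so the two are equal; symmetrically the $v$-max of $\cap\G$ equals $\langle v,b\rangle$. Hence $\cap\G$ has $v$-width exactly $\langle v,b-a\rangle$, which by hypothesis is at least one, and the desired bound on the $v$-width of $\cap\ff$ follows. The only mildly delicate point I anticipate is the limiting step $\varepsilon\to 0$ when extracting $\G_1$ and $\G_2$; this is handled cleanly by the pigeonhole argument above because $\ff$ is finite, and the rest of the proof is a direct application of Helly's theorem.
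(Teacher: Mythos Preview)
Your proof is correct. The overall architecture matches the paper's: isolate a subfamily of size at most $d$ that controls the $v$-directional minimum of $\cap\ff$, do the same for the maximum, and then apply the $2d$-wise hypothesis to their union. Where you differ is in \emph{how} the controlling $d$-subfamily is found.

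The paper argues directly: it takes the $d$-subfamily $A$ whose $v$-directional minimum $p$ is maximal among all $d$-subfamilies, and then shows that $p$ lies in every other set of $\ff$ by a Radon-partition argument in the hyperplane $\{y:\langle y,v\rangle=\langle p,v\rangle\}$. You instead reduce to classical Helly as a black box: adjoin the half-space $H_\varepsilon$, observe that the augmented family has empty intersection, and extract the $d$-subfamily from the $(d+1)$-tuple Helly produces, followed by a pigeonhole over $\varepsilon_n\to 0$. Your route is slightly more modular and avoids rerunning the Radon argument, at the cost of the limiting step; the paper's route is more explicit and actually pins down the extremal point $p$ as lying in every member of $\ff$, which is a marginally stronger intermediate conclusion. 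Either way the final step---combining $\G_1$ and $\G_2$ and invoking the $2d$-wise hypothesis---is identical. One small remark: if $|\G|<2d$ you should pad $\G$ with additional sets from $\ff$ before invoking the hypothesis; this is harmless since the padded intersection still contains $\cap\ff$ and is contained in $\cap\G$, so its $v$-width is still $\langle v,b-a\rangle$.
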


\begin{proof}
	Let $A$ be a subfamily of size $d$ whose $v$-directional minimum $p$ maximizes $\langle p, v\rangle$.  Given any other set $K _0 \in \ff$, let us show $p \in K_0$.  We know that $A \cup \{K_0\}$ must be intersecting, so let $u$ be a point of the intersection.  The minimality of $p$ implies $\langle u, v\rangle \ge \langle p, v\rangle$.
	
	If we denote by $K_1, K_2, \ldots, K_d$ the sets in $A$, every $d$-tuple of $A \cup \{K_0\}$ must be intersecting.  We call $p_i$ the $v$-directional minimum of $(A \cup \{K_0\})\setminus \{K_i\}$.  We know that $\langle p_i, v\rangle \le \langle p, v\rangle$ for each $i$ (also notice $p_0 = p$).  By convexity, there is a point $u_i \in (A \cup \{K_0\})\setminus \{K_i\}$ such that $\langle u_i, v \rangle = \langle p, v \rangle$ for each $i$.  This gives us $d+1$ points in the hyperplane $\{y : \langle y , v\rangle = \langle p, v \rangle \}$, of dimension $d-1$.
	
	By Radon's lemma \cite{Radon:1921vh}, these points can be partitioned into two sets $B, C$ such that $\conv (B) \cap \conv (C) \neq \emptyset$.  Let $p'$ be a point in $\conv (B) \cap \conv (C) \neq \emptyset$.  It is immediate that $p' \in K_i$ for all $0 \le i \le d$.  Thus, $p = p'$ and we have $p \in K_0$, as desired.
	
	Let $B$ be a subfamily of size $d$ with minimal $v$-directional maximum $q$.  Again, every set in the family contains $q$.  Since the $v$-width of $A \cup B$ is at least one, and this is realised by the segment $[p,q]$, the $v$-width of $\cap \ff$ is at least one. 
\end{proof}

\begin{theoremp}[Colourful Carath\'eodory for two points]\label{theorem-colourful-caratheodory}
	Given $2d$ sets of points $S_1, S_2, \ldots, S_{2d}$, and a set $S$ of two points $x,y$ such that $\{x,y\} \subset \conv (S_i)$ for each $1 \le i \le 2d$, there is a choice of points $s_1 \in S_1, \ldots, s_{2d} \in S_{2d}$ such that $\{x,y\} \in \conv \{s_1, \ldots, s_{2d}\}$
\end{theoremp}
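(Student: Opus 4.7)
The plan is to reduce the problem, by projecting along $v = y-x$ and splitting the colour classes into two halves, to two applications of a directional refinement of B\'ar\'any's colourful Carath\'eodory theorem.

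Assume $x \ne y$ (else the statement is trivial) and set $v = y - x$. Partition the $2d$ colour classes into two groups $A$ and $B$ of size $d$ each. The auxiliary statement I would invoke is: \emph{for finite point sets $T_1,\ldots,T_d$ in $\rr^d$ with $z \in \conv(T_i)$ for every $i$ and a nonzero vector $u$, there exist $t_i \in T_i$ with $z \in \conv\{t_1,\ldots,t_d\} + \rr_{\ge 0}\, u$.} Granting this, I apply it to $\{S_i\}_{i \in A}$ with target $x$ and direction $v$ to obtain a rainbow $\{a_i\}_{i \in A}$ whose convex hull meets the ray $R_x=\{x-tv : t \ge 0\}$ at a point $p_A$, and symmetrically to $\{S_j\}_{j \in B}$ with target $y$ and direction $-v$ to obtain $\{a_j\}_{j \in B}$ whose convex hull meets $R_y=\{y+tv : t \ge 0\}$ at a point $p_B$. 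Both $p_A$ and $p_B$ lie on the line through $x$ and $y$, with $p_A$ at or ``below'' $x$ and $p_B$ at or ``above'' $y$ in the $v$-direction. Hence $\conv\{a_i : i \in A \cup B\} \supset [p_A,p_B] \supset [x,y] \supset \{x,y\}$, producing the rainbow $2d$-tuple.

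The core step is the auxiliary statement, which I would prove via B\'ar\'any's closest-point method applied to the cone-extended convex set $C(\mathbf t) = \conv\{t_1,\ldots,t_d\} + \rr_{\ge 0}\, u$. Take a rainbow $\mathbf t^*$ minimizing $\operatorname{dist}(z, C(\mathbf t))$ and suppose this distance is positive. Let $q^*$ be the closest point in $C(\mathbf t^*)$ to $z$ and let $H$ be the supporting hyperplane at $q^*$ perpendicular to $z - q^*$; then $C(\mathbf t^*) \subset H^-$, $z \in H^+$ strictly, and because $u$ lies in the recession cone of $C$ the condition $q^* + s u \in C \subset H^-$ for all $s \ge 0$ forces $\langle u, z - q^* \rangle \le 0$. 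For each $i$, the hypothesis $z \in \conv(T_i)$ combined with $z \in H^+$ forces $T_i$ to contain a point strictly on the $z$-side of $H$. Replacing an appropriate $t_i^*$ with such a point strictly decreases $\operatorname{dist}(z, C(\mathbf t))$, contradicting the minimality of $\mathbf t^*$.

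The main technical obstacle is the degenerate sub-case in which the closest point $q^*$ sits in the relative interior of the simplex $\conv(\mathbf t^*)$ and all vertices $t_i^*$ lie on $H$, so that no vertex is \emph{strictly} on the far side and the swap no longer trivially produces a vertex previously in $H^-$ moving to $H^+$. In this case I would argue the swap still works by a direct distance computation: after replacing any $t_i^*$ with a point of $T_i \cap H^+$, the affine span of the new simplex tilts strictly toward $z$, and the resulting cone-extended set contains a point strictly closer to $z$ than $q^*$, which suffices for the contradiction.
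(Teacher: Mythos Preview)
The paper offers no proof here; it simply records the statement as a special case of \cite[Theorem~1.3]{DeLoera:2015wp}. So your attempt already goes beyond what the paper provides, and the overall architecture is sound: splitting the $2d$ colours into two blocks of $d$ and applying a directional colourful Carath\'eodory to trap $x$ from one side and $y$ from the other along $v$ is a clean reduction, and once your auxiliary lemma holds the inclusion $[x,y]\subset[p_A,p_B]\subset\conv\{a_i:i\in A\cup B\}$ follows exactly as you wrote.

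The gap is in your handling of the degenerate sub-case of the auxiliary lemma. The assertion that swapping any single $t_i^*$ for a point of $T_i\cap H^+$ strictly decreases $\operatorname{dist}(z,C(\mathbf t))$ is false. In $\rr^2$ take $z=(0,1)$, $u=(0,-1)$, $T_1=\{(-1,0),(1,2)\}$, $T_2=\{(1,0),(-1,2)\}$; then $z\in\conv(T_i)$ for both $i$. With $\mathbf t^*=\big((-1,0),(1,0)\big)$ one has $C(\mathbf t^*)=[-1,1]\times(-\infty,0]$, $q^*=(0,0)$, distance $1$, and both $t_i^*$ lie on $H=\{y=0\}$. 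Replacing $t_1^*$ by $(1,2)$ gives $C'=\{1\}\times(-\infty,2]$, whose nearest point to $z$ is $(1,1)$, still at distance $1$; the other swap is symmetric. (With $T_1=\{(-1,0),(5,6)\}$, $T_2=\{(1,0),(-5,6)\}$ a single swap even \emph{increases} the distance.) So the local descent stalls and your ``tilting'' heuristic does not repair it. The auxiliary lemma is nonetheless true: translating $z$ to the origin and setting $S_i=T_i$ for $i\le d$, $S_{d+1}=\{u\}$, every pair satisfies $0\in\conv(S_i\cup S_j)$, and the very colourful Carath\'eodory theorem of Arocha, B\'ar\'any, Bracho, Fabila and Montejano then furnishes a rainbow simplex $\conv\{t_1,\dots,t_d,u\}$ containing $0$, which is equivalent to $z\in\conv\{t_1,\dots,t_d\}+\rr_{\ge 0}\,u$. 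You should either invoke such a result or give a genuinely different argument for the degenerate case.
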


This is a particular case of Theorem 1.3 in \cite{DeLoera:2015wp}.

\begin{theorem}[Colourful Helly for $v$-width]
	Let $\ff_1, \ff_2, \ldots, \ff_{2d}$ be finite families of convex sets in $\rr^d$, considered as colour classes.  If the $v$-width of the intersection of every rainbow choice $F_1 \in \ff_1, \ldots, F_{2d} \in \ff_{2d}$ is at least one, then there is a colour class $\ff_i$ such that the $v$-width of $\cap \ff_i$ is at least one.
\end{theorem}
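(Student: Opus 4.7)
The strategy is to mimic the proof of Theorem~\ref{theorem-v-width-basic-helly}, using extremal \emph{rainbow} $d$-subfamilies in place of extremal $d$-subfamilies, and then to argue by contradiction via a case analysis.

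Let $A$ be a rainbow $d$-subfamily (one set from each of $d$ distinct colour classes, chosen from any $d$ of the $2d$ classes) maximising $\langle p_A, v\rangle$, where $p_A$ is the $v$-directional minimum of $\cap A$; write $I \subset \{1,\dots,2d\}$ for its set of colours. Symmetrically, let $B$ be a rainbow $d$-subfamily minimising $\langle q_B, v\rangle$ for $q_B$ the $v$-directional maximum of $\cap B$, with colour set $J$. A useful preliminary observation is that every rainbow choice of size $r$ with $d \le r \le 2d$ is intersecting: extending such a choice to a rainbow $2d$-choice yields a nonempty intersection by hypothesis, contained in the original.

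The first key step, a direct transcription of the Radon argument in Theorem~\ref{theorem-v-width-basic-helly}, shows that $p_A \in K$ for every $K \in \ff_j$ with $j \notin I$ (and symmetrically $q_B \in K$ for every $K \in \ff_j$ with $j \notin J$). Indeed, $A \cup \{K\}$ is a rainbow $(d+1)$-subfamily and each of its $d$-subfamilies is a rainbow $d$-subfamily whose $v$-minimum lies at height $\le \langle p_A, v\rangle$ by extremality of $A$; convexity together with $\cap(A \cup \{K\}) \neq \emptyset$ produces $d+1$ points, one in each such $d$-subfamily intersection, all at height $\langle p_A, v\rangle$; Radon in the $(d-1)$-dimensional hyperplane $\{y : \langle y, v\rangle = \langle p_A, v\rangle\}$ yields a point $p' \in \cap(A \cup \{K\})$ at this height, and uniqueness of the $v$-minimum of $\cap A$ forces $p' = p_A$. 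Hence $p_A \in \cap \ff_j$ for every $j \in I^c$ and $q_B \in \cap \ff_j$ for every $j \in J^c$.

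Finally, suppose for contradiction that every $\cap \ff_k$ has $v$-width strictly less than one, and split on whether $I \cap J$ is empty. If $I \cap J \neq \emptyset$, then $(I \cup J)^c$ is nonempty; picking $k$ in it gives $p_A \in \cap \ff_k$, and width $< 1$ produces $F_k^+ \in \ff_k$ whose $v$-maximum is strictly below $\langle p_A, v\rangle + 1$. Completing $A \cup \{F_k^+\}$ to a rainbow $2d$-choice $C$ by arbitrary sets from the remaining colours, $\cap C$ has $v$-minimum $\ge \langle p_A, v\rangle$ yet $v$-maximum $< \langle p_A, v\rangle + 1$, contradicting the hypothesis. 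If instead $I \cap J = \emptyset$, then $J = I^c$, $A \cup B$ is itself a rainbow $2d$-choice, and the hypothesis forces $\langle q_B - p_A, v\rangle \ge 1$. For each $j \in I$ (where $q_B \in \cap \ff_j$) the width bound produces $F_j^- \in \ff_j$ with $v$-minimum exceeding $\langle q_B, v\rangle - 1$, and for each $j \in J$ (where $p_A \in \cap \ff_j$) it produces $F_j^+ \in \ff_j$ with $v$-maximum below $\langle p_A, v\rangle + 1$. The rainbow $2d$-choice $\{F_j^- : j \in I\} \cup \{F_j^+ : j \in J\}$ then has $v$-width strictly less than $\langle p_A - q_B, v\rangle + 2 \le 1$, a final contradiction.

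The main obstacle is the bookkeeping of the sets $I$ and $J$ in the two-case analysis; the transfer of Theorem~\ref{theorem-v-width-basic-helly}'s Radon step to the rainbow setting is essentially routine once one checks that rainbow subfamilies of all sizes in $[d, 2d]$ are intersecting.
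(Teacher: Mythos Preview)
Your Radon step is correctly adapted: the extremal rainbow $d$-subfamily $A$ does satisfy $p_A\in K$ for every $K\in\ff_j$ with $j\notin I$, and likewise for $B$. The trouble is in the contradiction phase. In both cases you assert that from $p_A\in\cap\ff_k$ and $v\text{-width}(\cap\ff_k)<1$ one can extract a \emph{single} set $F_k^+\in\ff_k$ whose $v$-maximum is strictly below $\langle p_A,v\rangle+1$ (and the analogous statement for $F_j^-$). This implication is false. The bound $v\text{-width}(\cap\ff_k)<1$ together with $p_A\in\cap\ff_k$ only tells you that the $v$-maximum of the \emph{intersection} $\cap\ff_k$ lies below $\langle p_A,v\rangle+1$; it says nothing about the $v$-maximum of any individual member. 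For a concrete obstruction in $\rr^2$ with $v=(0,1)$ and $p_A=(0,0)$, take $\ff_k$ to consist of two thin convex strips through the origin with slopes $+1$ and $-1$: each extends to arbitrarily large height, yet their intersection is a small neighbourhood of the origin. No single $F_k^+$ will cap your rainbow $2d$-choice $C$ from above, so the inequality $v\text{-width}(\cap C)<1$ does not follow. The same issue recurs symmetrically in Case~2 with the sets $F_j^-$ and $F_j^+$.

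This gap does not look routinely patchable within your framework: the natural repair in Case~1 (observe that $k\notin I\cup J$ gives \emph{both} $p_A,q_B\in\cap\ff_k$, hence width $\ge\langle q_B-p_A,v\rangle$) still requires $\langle q_B-p_A,v\rangle\ge1$, which you only obtain when $A\cup B$ is itself rainbow, i.e.\ precisely in Case~2 --- where in turn no single colour contains both $p_A$ and $q_B$. The paper sidesteps this by invoking a general machinery (\cite[Thm.~5.3]{DeLoera:2015tc}) that manufactures a colourful Helly theorem from the monochromatic one once the property is monotone and admits a certain ``witness'' structure under containment-minimal halfspaces; that argument does not pass through a single blocking set per colour and so avoids the difficulty you encounter.
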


\begin{proof}
A systematic way to obtain a colourful Helly theorem from a ``monochromatic'' version was presented in \cite[Thm. 5.3]{DeLoera:2015tc}.  Thus, it is sufficient to check the conditions of that result.  Let $\mathcal{P}(K)$ stand for \textit{``$K$ has $v$-width at least one''}.  Then, the following properties are satisfied.
\begin{itemize}
	\item $\mathcal{P}$ is a Helly property (Theorem \ref{theorem-v-width-basic-helly}), with Helly number $2d$.
	\item $\mathcal{P}$ is a monotone property.  i.e. if $K \subset K'$ then $\mathcal{P}(K)$ implies $\mathcal{P}(K')$.
	\item Let $v'$ be a unit vector in $\rr^d$ sufficiently close to $v$, but different.  We consider a $v'$-semispace a set of the form $\{x \in \rr^d : \langle x, v' \rangle \le \alpha \}$ for some $\alpha$.  Then, for every compact convex $K$ without segments in its boundary such that $\mathcal{P}(K)$ holds, there is a containment minimal $v'$-semispace $H$ such that $\mathcal{P}(K \cap H)$ holds.  Moreover, if we denote by $p, q$ the $v$-directional minimum and $v$-directional maximum of $K \cap H$ (which exist and are unique since $v' \neq v$), then every closed convex subset $K' \subset K \cap H$ with $\mathcal{P'}$ satisfies that $\conv\{p,q\} \subset K'$.
\end{itemize}
Having these properties, \cite[Thm. 5.3]{DeLoera:2015tc} implies the result we were seeking.
\end{proof}

The same idea leads to a fractional version.

\begin{theorem}\label{theorem-helly-v-width}
	Let $\alpha >0 $, $d$ a positive integer and $v$ a unit vector in $\rr^d$.  Then, there is a positive constant $\beta$ depending only on $\alpha, d$ such that for any family $\ff$ of $n$ convex sets in $\rr^d$ such that the intersection of at least $\alpha {{n}\choose{2d}}$ of the $2d$-tuples has $v$-width greater than or equal to one, there is a subfamily $\ff'$ of $\ff$ of cardinality at least $\beta n$ such that its intersection has $v$-width at least one.
\end{theorem}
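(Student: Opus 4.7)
The strategy is to reduce Theorem \ref{theorem-helly-v-width} to the classical fractional Helly theorem by lifting into $\rr^{2d}$. For each $K \in \ff$, define the compact convex set
\[
\tilde K = \{(x,y) \in \rr^{d} \times \rr^{d} : x, y \in K,\ \langle y-x, v\rangle \geq 1\} \subseteq \rr^{2d}.
\]
A common point $(p,q)$ of $\{\tilde K : K \in S\}$ is precisely a pair $p, q \in \cap_{K \in S} K$ with $\langle q-p, v\rangle \geq 1$, so $\cap_{K \in S} \tilde K$ is non-empty if and only if $\cap_{K \in S} K$ has $v$-width at least one.  The hypothesis of the theorem now translates into the statement that at least $\alpha \binom{n}{2d}$ of the $2d$-subfamilies of $\tilde \ff = \{\tilde K : K \in \ff\}$ have a common point in $\rr^{2d}$, and we seek a subfamily of $\tilde \ff$ of size $\beta n$ with a common point.

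By Theorem \ref{theorem-v-width-basic-helly}, the family $\tilde \ff$ has Helly number at most $2d$, one less than the generic Helly number $2d+1$ available in $\rr^{2d}$.  I would then invoke a fractional Helly theorem for families of convex sets with Helly number $h$: provided at least $\alpha \binom{n}{h}$ of the $h$-subfamilies have a common point, some $\beta = \beta(\alpha,h) > 0$ fraction of the ground family does as well.  For $h$ equal to the ambient dimension plus one this is Kalai's classical fractional Helly theorem, and the same nerve-theoretic argument yields the statement at level $h$ whenever the nerve is $(h-1)$-Leray, as happens here since Helly number $h$ forces $(h-1)$-Lerayness for nerves of convex families.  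Applied to $\tilde \ff$ with $h = 2d$, this produces a subfamily of $\ff$ of size $\beta n$ whose intersection has $v$-width at least one.

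The main obstacle is justifying the precise form of fractional Helly used, since Kalai's original theorem is usually stated with $(d+1)$-tuples in $\rr^d$ rather than with Helly-number tuples.  A self-contained alternative would apply classical Kalai directly to $(2d+1)$-subfamilies of $\tilde \ff$ in $\rr^{2d}$: by Theorem \ref{theorem-v-width-basic-helly}, a $(2d+1)$-subfamily of $\tilde \ff$ has non-empty intersection if and only if each of its $2d$-subsubfamilies does, so the task reduces to upgrading the $\alpha \binom{n}{2d}$ good $2d$-tuples into a comparable density of good $(2d+1)$-tuples.  This upgrade is the combinatorial heart of the argument and can be carried out by a cleaning procedure that iteratively discards sets participating in too few good $2d$-subfamilies, reducing to a subfamily where the density of good $2d$-tuples is high enough that the simple shadow bound kicks in.
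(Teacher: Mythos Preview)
Your lifting idea is elegant and genuinely different from the paper's approach, but the argument as written has a real gap at exactly the point you flag.

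\textbf{The gap.} The claim ``Helly number $h$ forces $(h-1)$-Lerayness for nerves of convex families'' is false. A concrete counterexample in $\rr^3$: take three slabs $a,b,c$ whose pairwise intersections are three parallel lines but $a\cap b\cap c=\emptyset$, together with two disjoint half-spaces $p=\{z\ge 1\}$ and $q=\{z\le -1\}$ each meeting all three lines. The nerve is the boundary of a bipyramid, which has $\tilde H_2\neq 0$, so it is not $2$-Leray; yet one checks that the family has Helly number $3$. So you cannot pass from ``Helly number $2d$'' to ``$(2d-1)$-Leray'' for $\tilde\ff$ without further input. Your fallback---upgrading $2d$-tuple density to $(2d+1)$-tuple density by cleaning---also fails in general: in the graph case ($d=1$), the complete bipartite graph $K_{n/2,n/2}$ has edge density $1/2$ but no triangles, and cleaning by minimum degree does nothing. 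Convexity must enter, and you have not said how.

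\textbf{The fix.} Replace the half-space by the hyperplane. Set
\[
\hat K=\{(x,y)\in\rr^d\times\rr^d:\ x,y\in K,\ \langle y-x,v\rangle=1\}.
\]
These are convex sets in the $(2d-1)$-dimensional affine subspace $\{\langle y-x,v\rangle=1\}$, and a convexity argument shows $\cap_S\hat K\neq\emptyset$ iff $\cap_S\tilde K\neq\emptyset$ iff $\cap_S K$ has $v$-width at least one. Now the classical Katchalski--Liu fractional Helly in $\rr^{2d-1}$ applies directly with $2d$-tuples and yields the conclusion, with no Leray or upgrade step needed.

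\textbf{Comparison with the paper.} The paper does not reduce to classical fractional Helly at all; it gives a self-contained argument in the style of the original Katchalski--Liu proof. It introduces an auxiliary direction $v'\neq v$ close to $v$, and for each $(2d-1)$-tuple $A$ with $v$-width at least one takes the containment-minimal $v'$-halfspace $H_A$ such that $\cap(A\cup\{H_A\})$ still has $v$-width at least one, recording the two extreme points $p_A,q_A$. Each good $2d$-tuple is then assigned to its $(2d-1)$-subtuple with maximal $H_A$, and a double count produces a single pair $\{p_{A_0},q_{A_0}\}$ contained in $\beta n$ sets. Your corrected lifting is shorter and reuses existing machinery as a black box; the paper's route is more hands-on but yields the witnessing segment explicitly and keeps the argument internal to $\rr^d$.
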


\begin{proof}
Let $v' \neq v$ be a unit vector in $\rr^d$ sufficiently close to $v$.  For each subfamily $A$ of $\ff$ of cardinality $2d-1$ whose intersection has $v$-width at least one, let $H_A$ be the containment-minimal $v'$-semispace such that $\cap (A \cup \{H_A\})$ has $v$-width at least one.  Notice that if we consider $p_A, q_A$ the $v$-directional maximum and minimum of $\cap (A \cup \{H_A\})$ respectively (which exist and are unique since $v' \neq v$), then every convex set $C$ of $v$-width at least one such that $C \subset \cap (A \cup \{H_A\})$ satisfies $\{p_A, q_A\} \subset C$.

For each subfamily $B$ of $2d$ sets of $\ff$ whose intersection has $v$-width greater than or equal to one, let $A$ be its subfamily of size $2d-1$ with containment-maximal $H_A$.  Then, Theorem \ref{theorem-v-width-basic-helly} implies that the intersection of $B \cup \{H_A\}$ has $v$-width at least one, so all the sets in $B$ contain $\{p_A, q_A\}$

Consider the function that assigns to each $2d$-tuple $B$ with $v$-width at least one a $(2d-1)$-tuple $A$ as above.  Since a positive fraction of the $2d$-tuples satisfy this property, a direct double counting argument shows that there is a $(2d-1)$-tuple $A_0$ which was assigned at least $\beta n$ times for some positive $\beta$ depending only $d$ and $\alpha$.  Thus, at least $\beta n$ sets contain $\{p_{A_0}, q_{A_0}\}$, as desired.

\end{proof}

With the results above, the methods in Section \ref{section-fractional} can be carried out verbatim to the $v$-width case to prove the following result.

\begin{theorem}[$(p,q)$ theorem for $v$-width]
	Let $p \ge q \ge 2d$ be positive integers and $v$ a unit vector in $\rr^d$.  Then, there is a $c' = c'(p,q,d)$ such that for any finite family $\ff$ of at least $p$ convex sets of $v$-width at least one each, if out of every $p$ sets in $\ff$, there are $q$ of them whose intersection has $v$-width at least one, then we can find $c'$ convex sets $K_1, K_2, \ldots, K_{c'}$ of $v$-width at least one such that every set in $\ff$ contains at least one $K_i$.\end{theorem}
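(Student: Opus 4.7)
The plan is to follow the proof of Theorem \ref{theorem-p,q-diameter} carried out in Section \ref{section-fractional} essentially verbatim, with the property ``has $v$-width at least one'' replacing ``has diameter at least one''. The key simplification in this setting is that the $v$-width of a compact convex set (with no segments on its boundary) is realised by the unique segment joining its $v$-directional minimum to its $v$-directional maximum, so every intermediate geometric witness is an honest segment of $v$-width at least one. This is what eliminates the $\delta$-loss present in Theorem \ref{theorem-p,q-diameter}: no approximation of the witness is ever required.

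Concretely, I would first prove a Tverberg-type statement for $v$-width: any $(r-1)(2d)+1$ convex sets in which every $2d$-tuple has intersection of $v$-width at least one can be partitioned into $r$ groups, each of whose intersection has $v$-width at least one. This uses Theorem \ref{theorem-v-width-basic-helly} together with the two-point colourful Carath\'eodory statement (Theorem \ref{theorem-colourful-caratheodory}) to force a common segment $[p,q]$ into each group's intersection. Next, combining this Tverberg-type bound with the fractional Helly Theorem \ref{theorem-helly-v-width} gives a B\'ar\'any-style first selection lemma for $v$-width: from any family of $n$ convex sets in which a positive fraction of the $2d$-tuples have intersection of $v$-width at least one, one extracts a single segment of $v$-width at least one that is contained in a positive fraction of the sets.

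Iterating this first selection lemma in the standard Alon--Kleitman--Matou\v{s}ek manner produces a weak $\varepsilon$-net of bounded size consisting of segments of $v$-width at least one: a constant-size list $[p_1,q_1],\ldots,[p_t,q_t]$ such that whenever a subfamily of $\ff$ has density at least $\varepsilon$, some segment $[p_i,q_i]$ lies in the intersection of that subfamily. Plugging this into the Alon--Kleitman LP-duality argument, with the $(p,q)$ hypothesis playing its usual role, yields the desired piercing number $c'=c'(p,q,d)$, where ``piercing'' here means that every $F\in\ff$ contains at least one segment $K_i$ of $v$-width at least one.

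The main obstacle is bookkeeping rather than genuine novelty: at every stage of the Alon--Kleitman pipeline one must track pairs (segment of $v$-width $\ge 1$, family whose intersection contains it) rather than pairs (point, family containing it), and one must check that the output of each step is still a segment of the correct width rather than a degenerate point. Theorems \ref{theorem-v-width-basic-helly}, \ref{theorem-colourful-caratheodory} and \ref{theorem-helly-v-width} are precisely tailored to deliver segment witnesses with the required $v$-width, so the substitution goes through at every step; the absence of any $\delta$ in the conclusion is a direct consequence of this.
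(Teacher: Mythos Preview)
Your overall plan---run the Alon--Kleitman pipeline of Section~\ref{section-fractional} with ``$v$-width $\ge 1$'' in place of ``diameter $\ge 1$''---is exactly what the paper does, and your observation that the $\delta$-loss disappears because $v$-width witnesses are exact segments is correct. However, several of your intermediate statements are misformulated in ways that would break the argument if taken literally, and they all stem from the same confusion: the Tverberg/selection/weak-net steps are about \emph{convex hulls of unions}, not about intersections.

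Your ``Tverberg-type statement'' as written is trivial: if every $2d$-tuple already has intersection of $v$-width $\ge 1$, Theorem~\ref{theorem-v-width-basic-helly} gives that the whole family does, and any partition works. The statement actually needed (parallel to Theorem~\ref{theorem-tverberg-diameter}) assumes only that each $T_i$ \emph{individually} has $v$-width $\ge 1$ and concludes that $\bigcap_j \conv(\cup A_j)$ has $v$-width $\ge 1$. Likewise, what you describe as the ``first selection lemma'' is verbatim the fractional Helly Theorem~\ref{theorem-helly-v-width}; the genuine selection lemma (cf.\ Theorem~\ref{theorem-selection-diameter}) takes sets each of $v$-width $\ge 1$ and produces a segment contained in $\conv(\cup A)$ for a positive fraction of the $2d$-subsets $A$. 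Finally, your weak $\varepsilon$-net cannot demand that some $[p_i,q_i]$ lie in the \emph{intersection} of an arbitrary $\varepsilon$-dense subfamily $\ff'$---such a subfamily need not intersect at all---but only in $\conv(\cup\ff')$, as in Theorem~\ref{theorem-weak-nets}. That weaker conclusion is exactly what Lemma~\ref{lemma-one} uses: the relevant $\ff'$ consists of sets all contained in a single convex $F\in\ff$, so $\conv(\cup\ff')\subset F$. With these three corrections in place the pipeline goes through verbatim, as the paper asserts.
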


\section{Colourful Helly for the diameter}\label{section-colourful}

\begin{theorem}\label{theorem-colourful-diameter}
	There is an $n'=n'\left(\diam, d, \delta\right)$ such that for any $n'$ finite families  $\ff_1, \ff_2, \ldots, \ff_{n'}$ of convex sets in $\rr^d$, considered as colour classes, if the intersection of every colourful choice $F_1 \in \ff_1, \ldots, F_{n'} \in \ff_{n'}$ has diameter at least one, then there is a colour class $\ff_i$ with $\diam (\cap \ff_i) \ge 1- \delta$.  Moreover, $n'(\diam , d, \delta ) = \Theta_d \left(\delta^{-(d-1)/2}\right)$.
\end{theorem}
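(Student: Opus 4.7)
The plan is to reduce the statement to the colourful Helly theorem for $v$-width from Section~\ref{section-width}, applied once per direction in an appropriately chosen net of $S^{d-1}$. Let $\delta' = \sqrt{\delta}$ and fix a $\delta'$-net $N = \{v_1,\ldots,v_m\} \subset S^{d-1}$ with $|N| = m = \Theta_d(\delta^{-(d-1)/2})$. The key geometric fact I will use is that if a compact convex set $K$ has diameter at least $1$, realized by some chord in direction $u$, then for any $v_j \in N$ with $\angle(u,v_j) \leq \delta'$ one has $v_j$-width $(K) \geq \cos(\delta') \geq 1-\delta/2$. I set $n' = 2dm$ and partition the colour classes into $m$ groups $G_1,\ldots,G_m$ of size $2d$ each, assigning direction $v_j$ to group $G_j$.

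The argument then splits into two cases. If for some $j$ every rainbow $2d$-tuple drawn from $G_j$ has intersection with $v_j$-width at least $1-\delta$, then the colourful Helly theorem for $v$-width (rescaled to threshold $1-\delta$) supplies a colour class $\ff_i \in G_j$ such that $\cap \ff_i$ has $v_j$-width, and therefore diameter, at least $1-\delta$, which is the desired conclusion. Otherwise, for each $j$ I can pick a rainbow $2d$-tuple $R_j$ from $G_j$ whose intersection has $v_j$-width strictly less than $1-\delta$. Concatenating these choices across all groups produces a single rainbow selection $R = R_1 \cup \cdots \cup R_m$ over all $n'$ classes. The hypothesis gives $\diam(\cap R) \geq 1$, so by the net property above there exists $v_k \in N$ with $v_k$-width $(\cap R) \geq 1-\delta/2$. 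Since $R_k \subset R$ as families, $\cap R \subset \cap R_k$ and hence $v_k$-width $(\cap R_k) \geq 1-\delta/2 > 1-\delta$, contradicting the defining property of $R_k$.

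The matching lower bound $n' = \Omega_d(\delta^{-(d-1)/2})$ is inherited for free: taking all colour classes equal to a common family reduces the statement to the monochromatic De Loera et al.\ diameter Helly, whose lower bound already has this order. The main obstacle I foresee is precisely the mismatch between the direction-agnostic diameter hypothesis and the direction-specific $v$-width machinery; the grouping construction resolves this by paying only a factor of $2d$ per direction and by exploiting the strict gap $1-\delta/2 > 1-\delta$ that appears when a diameter chord is snapped to a $\sqrt{\delta}$-net. The outer $2d$ factor is what forces the exponent $(d-1)/2$ in $n'$, and the lower bound confirms this is asymptotically tight.
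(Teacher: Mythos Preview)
Your proof is correct and follows essentially the same route as the paper: choose a net of $\Theta_d(\delta^{-(d-1)/2})$ directions on $S^{d-1}$, assign $2d$ colour classes to each direction, apply the colourful Helly theorem for $v$-width per direction, and derive a contradiction from the concatenated full rainbow. The only differences are cosmetic---the paper parametrises the net by inner product rather than angle, argues directly by contradiction rather than through an explicit case split, and does not introduce the $1-\delta/2$ slack (it is not needed since the bad rainbow choices have $v_j$-width \emph{strictly} below $1-\delta$).
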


If $\ff_1 = \ldots = \ff_{n'}$, we obtain the monochromatic result, and the upper bound matches the one mentioned in the introduction.  The equivalent result for the volume \cite[Thm. 1.5]{Soberon:2015tsa} has a worse bound $n'(\vol, d, \delta) = O_d(\delta^{-(d^2-1)/4})$.

\begin{proof}
	Given a point $x \in S^{d-1}$, we denote by $C_{\delta}(x)$ the cap 
	\[
	C_{\delta}(x):= \left\{y \in S^{d-1}: \langle x, y \rangle \ge 1-{\delta} \right\}.
	\]
	We denote by $c_{\delta}$ its measure under the usual probability Haar measure of $S^{d-1}$.  It is known that $c_{\delta} = \Omega (\delta^{(d-1)/2})$ {\cite[Lemma 2.3]{ball1997elementary}}.
	
	Let $m= \left\lfloor\frac{1}{c_{\delta / 4}}\right\rfloor$ and consider $n' = 2dm$.  Assume that $\diam \ff_i < 1-\delta$ for all $i$.  We look for a contradiction.
	
	We can find $v_1, v_2, \ldots, v_{m}$ directions in $S^{d-1}$ such that for any $v \in S^{d-1}$, there is a $v_j$ with $\langle v, v_j \rangle \ge 1-\delta$.  In order to see this, take a set of points in $S$ of maximal cardinality such the caps $C_{\delta /4} (x)$ for $x \in S$ are have pairwise disjoint interiors.  By counting surface area one gets $|S| \le \lfloor{(c_{\delta / 4})^{-1}}\rfloor$.  However, if there was a direction $v$ not satisfying the conditions, an elementary geometric argument shows that we would be able to include $v$ in $S$, contradicting its maximality.
	
	For each $v_j$, consider $2d$ colour classes associated to it.  Since $\diam (\ff_i) < 1- \delta$ for all $i$, then their $v_j$-widths are also smaller than $1-\delta$.  Thus, by Theorem \ref{theorem-helly-v-width}, there must be a rainbow choice of these $2d$ colours such that the $v_j$-width of its intersection is strictly smaller than $1-\delta$.  Take $X$ to be the union of all these $2d$-tuples.  Notice that the $v_j$-width of $\cap X$ is strictly smaller than $1-\delta$ for all $v_j$.
	
	Let $\lambda = \diam (\cap X) \ge 1$, and $v$ a direction realising it.  Thus, $X$ contains a segment parallel to $v$ of length $\lambda$.  Let $v_j$ be such that $\langle v_j, v \rangle \ge 1- \delta$.  This implies that the $v_j$-width of $X$ is at least $1-\delta$, a contradiction.
\end{proof}

\section{Fractional and $(p,q)$ results for the diameter}\label{section-fractional}

In order to prove Theorem \ref{theorem-p,q-diameter}, we need to recreate the results needed for the proof of the original $(p,q)$ theorem for the diameter.  Simplified versions of Alon and Kleitman's method can be found in \cite{Alon:1996uf, Matousek:2002td}.  There are two main ingredients needed.  One is a fractional Helly theorem and the second is the existence for weak $\epsilon$-nets for convex sets of small size.  Their equivalents are Theorem \ref{theorem-fractional} and \ref{theorem-weak-nets} described below.

The structure of the proof we present here is the same.  However, some definitions, such as the one for weak $\varepsilon$-net, must be adapted.  In the case of volume, it is possible to recreate these results using properties of floating bodies \cite{Soberon:2015tsa}.  Namely, given a convex set $K$ of volume one, and $\varepsilon >0$, we define its floating body $K_{\varepsilon}$ as
\[
K_{\varepsilon} = K \setminus \cup \{H : H \ \mbox{is a halfsapce}, \vol (H \cap K) \le \varepsilon \}.
\]

There estimates on $\vol (K_{\varepsilon})$ allow for the proofs to work \cite{Barany:2010cy}.  For the diameter, there is no similar ``floating body''.  However, pigeonhole arguments on the directions realising the diameter, as in section \ref{section-colourful}, are sufficient.  Let us begin with a fractional Helly for diameter in the same spirit as \cite{Katchalski:1979bq}.

\begin{theorem}[Fractional Helly for the diameter]\label{theorem-fractional}
	Let $\alpha >0$, $1> \delta >0$ and $d$ a positive integer.  Then, there is a positive constant $\beta$ depending only on $\alpha, d, \delta$ such that for any finite family $\ff$ of $n$ convex sets in $\rr^d$ such that the intersection of at least $\alpha {{n}\choose{2d}}$ of the $2d$-tuples has diameter greater than or equal to one, there is a subfamily $\ff'$ of $\ff$ of cardinality at least $\beta n'$ such that its intersection has diameter at least $1-\delta$.	
\end{theorem}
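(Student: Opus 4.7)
The plan is to reduce the diameter hypothesis to a fixed-direction width hypothesis and then invoke the fractional $v$-width result, Theorem~\ref{theorem-helly-v-width}. Following the cap-covering argument used in the proof of Theorem~\ref{theorem-colourful-diameter}, I would first choose a maximal set of directions $v_1, \ldots, v_m \in S^{d-1}$ whose caps $C_{\delta/4}(v_j)$ have pairwise disjoint interiors; the standard cap-area estimate yields $m = O_d(\delta^{-(d-1)/2})$, and by maximality every unit vector $v$ satisfies $\langle v, v_j \rangle \ge 1-\delta$ for some index $j$.

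For each $2d$-tuple $B \subset \ff$ with $\diam(\cap B) \ge 1$, I would pick $p, q \in \cap B$ realizing the diameter, set $v = (q-p)/\|q-p\|$, and assign $B$ to any $v_j$ with $\langle v, v_j \rangle \ge 1-\delta$. The projection of $[p,q] \subset \cap B$ onto the line spanned by $v_j$ has length at least $(1-\delta)\|q-p\| \ge 1-\delta$, so the $v_j$-width of $\cap B$ is at least $1-\delta$. By pigeonhole some direction $v_{j_0}$ receives at least $(\alpha/m)\binom{n}{2d}$ of these $2d$-tuples, each contributing an intersection of $v_{j_0}$-width at least $1-\delta$.

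Now rescale every set of $\ff$ by the factor $1/(1-\delta)$. Since $v$-width is positively homogeneous, the rescaled family has at least $\alpha/m$ fraction of its $2d$-tuples with $v_{j_0}$-width at least one. Applying Theorem~\ref{theorem-helly-v-width} to the rescaled family with fixed direction $v_{j_0}$ and density parameter $\alpha/m$ produces a subfamily $\ff'$ of cardinality at least $\beta' n$, for some $\beta' = \beta'(\alpha/m, d)$, whose intersection has $v_{j_0}$-width at least one in the rescaled family, equivalently $v_{j_0}$-width at least $1-\delta$ in the original family. Since the diameter of a compact convex set dominates its $v$-width in any direction, $\diam(\cap \ff') \ge 1-\delta$, and setting $\beta := \beta'(\alpha/m, d)$ gives a constant depending only on $\alpha, d, \delta$.

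No step here should be a serious obstacle: all three ingredients (cap-covering of $S^{d-1}$, a pigeonhole over finitely many directions, and the fractional Helly for $v$-width) are already available. The only point worth double-checking is that Theorem~\ref{theorem-helly-v-width} can be fed a threshold of $1-\delta$ instead of $1$, which is handled by the trivial rescaling above; this is the main conceptual bridge, and once it is in place the proof follows the same template that will be used subsequently to derive Theorem~\ref{theorem-p,q-diameter}.
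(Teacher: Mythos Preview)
Your proposal is correct and takes essentially the same approach as the paper: cluster the diameter-realising directions to find a common direction $v$, then invoke Theorem~\ref{theorem-helly-v-width}. The paper uses a continuous averaging over $S^{d-1}$ (finding a cap $C_\delta(v)$ containing a $c_\delta$-fraction of the directions) rather than your finite net plus pigeonhole, and it applies Theorem~\ref{theorem-helly-v-width} without spelling out the rescaling you mention, but these are cosmetic differences.
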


The equivalent result for volume \cite[Thm. 1.4]{Soberon:2015tsa} has the disadvantage that the size of the subfamilies needed to check grows as $\delta$ decreases.  Namely, it needs to check families of size $O(\delta^{-(d^2-1)/4})$, which is much worse than the requirements of the volumetric Helly theorem.  Theorem \ref{theorem-fractional} is an example of a fractional Helly-type theorem which goes beyond its corresponding Helly theorem.  Such examples have appeared previously for set systems of bounded VC-dimension \cite{Matousek:2004cs}, for convexity on the integer lattice \cite{Anonymous:PHt9HPGF} or for checking the existence of hyperplane transversals in $\rr^d$ \cite{Alon:1995fs}.

\begin{proof}[Proof of Theorem \ref{theorem-fractional}]
	Consider the usual probability Haar measures on $S^{d-1}$.  For $y \in S^{d-1}$, let $C_\delta (y)$ be the set of points $x \in S^{d-1}$ such that $\langle x, y \rangle \ge 1-\delta$.  Let $c_{\delta}$ be the measure of $C_{\delta} (y)$.  A double counting argument shows that for any set $D$ of points in $S^{d-1}$, there must be a subset $D'$ of cardinality at least $c_{\delta} |D|$ and a point $v$ in $S^{d-1}$ such that $D' \subset C_{\delta} (v)$.
	
	For each such $2d$-tuple $B \subset \ff$, consider a direction $v_B$ realising its diameter.  Each of these directions can be represented by an antipodal pair on $S^{d-1}$.  Using the observation above, there must be a direction $v$ and set of at least $2 \alpha c_{\delta}{{n}\choose{2d}}$ of the $2d$-tuples of $\ff$ whose intersections have $v$-width greater than or equal to $1-\delta$.  Applying Theorem \ref{theorem-helly-v-width}, we are done.
\end{proof}

In order to get to the existence of weak $\varepsilon$-nets, we start by getting results showing that given a set of objects in $\rr^d$, there is a point $p$ that is ``sufficiently well surrounded'' by them.  The first result of this type is Tverberg's theorem.

Tverberg's theorem \cite{Tverberg:1966tb} says that given enough points in $\rr^d$, they can be split into $m$ parts such that the convex hulls of the parts intersect.  A point in this intersection is in some sense ``very deep'' within the original set of points.  In order to recreate this for the diameter, we need to work with sets with large diameter instead of points, giving the following statement.

\begin{theorem}[Tverberg's theorem for diameter]\label{theorem-tverberg-diameter}
	Let $d, m$ be positive integer, $1>\delta > 0$ and $n = \left\lfloor 4d^2(m-1)c_{\delta}^{-1}\right\rfloor+1$.  Given a family $\mathcal{T} = \{T_1, T_2, \ldots, T_{n}\}$ of sets in $\rr^d$ of diameter greater than or equal to one each, there is a partition of them into $m$ parts $A_1, A_2, \ldots, A_m$ so that
	\[
	\diam \left(\bigcap_{i=1}^m \conv(\cup A_i)\right) \ge 1- \delta.
	\] 
\end{theorem}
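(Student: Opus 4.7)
The plan is to combine a pigeonhole on the direction sphere $S^{d-1}$ with the classical Tverberg theorem in dimension $2d$, applied not to points in $\rr^d$ but to the \emph{coupled} pair of endpoints of a diameter-realising segment in each $T_i$. For every $T_i$ pick $p_i, q_i \in T_i$ with $\|q_i - p_i\| \geq 1$; the key idea will be to run Tverberg on the concatenated points $(p_i, q_i) \in \rr^{2d}$, so that a single set of convex weights expresses both endpoints of a candidate segment across all blocks of the partition.

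For the pigeonhole step I would proceed exactly as in the proof of Theorem \ref{theorem-fractional}: consider the normalised direction $v_i = (q_i - p_i)/\|q_i - p_i\| \in S^{d-1}$ (with an orientation that is free to be flipped by swapping $p_i$ and $q_i$), and apply the double-counting argument on caps $C_\delta(v)$ of measure $c_\delta$ to obtain a unit vector $v$ and a subfamily $\mathcal{T}' \subseteq \mathcal{T}$ of cardinality at least $c_\delta |\mathcal{T}|$ with $v_i \in C_\delta(v)$, so that $\langle q_i - p_i, v \rangle \geq 1 - \delta$ for all $T_i \in \mathcal{T}'$. The numerical choice of $n$ is designed so that $c_\delta n \geq 4d^2(m-1) \geq (2d+1)(m-1) + 1$, which is precisely the Tverberg number for partitioning a set of points in $\rr^{2d}$ into $m$ parts.

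Next, apply Tverberg's theorem in $\rr^{2d}$ to the points $\{(p_i, q_i) : T_i \in \mathcal{T}'\}$. This produces a partition of $\mathcal{T}'$ into $m$ parts $A_1', \ldots, A_m'$ and a common point $(P, Q) \in \rr^{2d}$ such that, for each $j$, there exist non-negative weights $\lambda_i^{(j)}$ with $\sum_{T_i \in A_j'} \lambda_i^{(j)} = 1$ and
\[
P = \sum_{T_i \in A_j'} \lambda_i^{(j)} p_i, \qquad Q = \sum_{T_i \in A_j'} \lambda_i^{(j)} q_i.
\]
Distribute the sets in $\mathcal{T} \setminus \mathcal{T}'$ arbitrarily to obtain a partition $A_1, \ldots, A_m$ of all of $\mathcal{T}$; by monotonicity this only enlarges $\conv(\cup A_j)$. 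Since $p_i, q_i \in T_i \subseteq \cup A_j$ whenever $T_i \in A_j'$, both $P$ and $Q$ lie in $\conv(\cup A_j)$, hence so does the segment $[P, Q]$. Finally
\[
\langle Q - P, v \rangle = \sum_{T_i \in A_j'} \lambda_i^{(j)} \langle q_i - p_i, v \rangle \geq 1 - \delta,
\]
so $\|Q - P\| \geq 1 - \delta$ and $\bigcap_{j=1}^m \conv(\cup A_j)$ contains a segment of length at least $1-\delta$, which is the required bound.

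The only subtle point is the need to pass through $\rr^{2d}$ rather than $\rr^d$. If one ran Tverberg separately on the $p_i$ and on the $q_i$, the two resulting partitions would in general differ, and even after forcing a common partition the produced points $P$ and $Q$ would come with \emph{different} convex weights in each block; the estimate $\langle Q - P, v\rangle \geq 1-\delta$ requires the weights to coincide, since only then does the per-set inequality $\langle q_i - p_i, v\rangle \geq 1-\delta$ aggregate to a lower bound on the $v$-component of $Q - P$. Coupling the two endpoints into a single point of $\rr^{2d}$ is precisely what forces this common-weights property, and it is what explains the quadratic factor $d^2$ in the hypothesis on $n$. The case $m = 1$ is trivial and may be treated separately.
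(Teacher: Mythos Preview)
Your proof is correct and takes a genuinely different route from the paper after the common pigeonhole step. Both arguments first find a direction $v$ and a subfamily $\mathcal{T}'$ of size at least $4d^2(m-1)+1$ whose members all have $v$-width at least $1-\delta$. From there the paper stays in $\rr^d$: it applies its $v$-width Helly theorem (Theorem~\ref{theorem-v-width-basic-helly}) to the family of convex hulls of large subunions of $\mathcal{T}'$, extracts two points $x,y$ realising the $v$-width of the resulting intersection, and then runs a greedy peeling argument using the colourful Carath\'eodory theorem for two points, producing the blocks $A_1,\dots,A_m$ one at a time. Your approach instead lifts each $T_i\in\mathcal{T}'$ to the single point $(p_i,q_i)\in\rr^{2d}$ and invokes the classical Tverberg theorem once; the coupling forces identical convex weights on the $p$- and $q$-sides, which is exactly what is needed to push the per-set inequality $\langle q_i-p_i,v\rangle\ge 1-\delta$ through to $\langle Q-P,v\rangle\ge 1-\delta$. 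Your argument is shorter and relies only on the standard Tverberg theorem, bypassing both Theorem~\ref{theorem-v-width-basic-helly} and the two-point colourful Carath\'eodory; the paper's version has the compensating virtue of reusing the $v$-width toolkit it has already built and of making explicit a depth statement about the pair $\{x,y\}$ (every half-space through either point meets many $T_i$'s), which is later recycled in the proof of Theorem~\ref{theorem-selection-diameter}. One minor wrinkle: in your inequality chain $c_\delta n$ need not itself be at least $(2d+1)(m-1)+1$; what you actually use is that $|\mathcal{T}'|$ is an integer with $|\mathcal{T}'|\ge c_\delta n> 4d^2(m-1)$, hence $|\mathcal{T}'|\ge 4d^2(m-1)+1\ge (2d+1)(m-1)+1$.
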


\begin{proof}
	By a double counting as before, there is a subfamily $\mathcal{T}' \subset \mathcal{T}$ of cardinality greater than or equal to $4d^2(m-1)+1$ and a direction $v$ such that the $v$-width of every member of $\mathcal{T}'$ is at least $1-\delta$.
	
	Now consider 
	\[
	\ff = \{\conv (\cup \G): \G \subset \mathcal{T}', |\G| = (m-1)(2d-1)2d+1 \}.
	\]
	Notice that every family forming an element of $\ff$ is missing at most $2d(m-1)$ members of $\mathcal{T}'$.  Thus, the intersection of every $2d$ of them contains some $T_i \in \mathcal{T'}$ which in turn implies that the $v$-width of their intersection is at least $1-\delta$.  Thus, by Theorem \ref{theorem-v-width-basic-helly} the $v$-width of $\cap \ff$ is at least $1-\delta$.
	
	Take two points $x, y \in \cap \ff$ that realise its $v$-width.  Every half-space containing either of them has non-empty intersection with at least $2d(m-1)+1$ sets of $\mathcal{T}'$.  Otherwise, it would contradict the fact that they are contained in $\cap \ff$.
	
	Thus, $x,y$ are contained in the convex hull of $\cup \mathcal{T}'$.  By the colourful Carath\'eodory theorem for two points (see Section \ref{section-width}) with $\ff_i = \cup \ttt'$ for $1 \le i \le 2d$, the set $\{x,y\}$ is contained in the convex hull of $2d$ points of $\cup \mathcal{T}'$.  If we remove the sets $T_i$ that generated these points and set them aside in a set called $A_1$, we have that every half-space containing either of $x,y$ has non-empty intersection with at least $2d(m-2)+1$ sets of what is left of $\mathcal{T}'$.
	Thus, we can continue this process and generate the desired sets $A_1, A_2, \ldots, A_m$ inductively. 
\end{proof}

Using Tverberg's theorem and colourful Carath\'edory, one can prove B\'ar\'any's selection theorem \cite{Barany:1982va}, also called the ``first selection lemma'' in \cite{Matousek:2002td}.  It says that, given a finite set $S$ of points in $\rr^d$, there is a point $p$ in a positive fraction of the simplices spanned by $S$.  This result holds in much more general settings, by either replacing the word ``simplex'' by the image of a different operator or requiring additional properties on the simplices containing the point \cite{Gromov:2010eb, Karasev:2012bj, Pach:1998vx, Magazinov:2015td}.  For our purposes we only need a diameter version of the original result by B\'ar\'any.

\begin{theorem}[Selection lemma for diameter]\label{theorem-selection-diameter}
	Let $d$ be a positive integer and $1 > \delta > 0$.  There is a constant $\lambda = \lambda (\delta, d)$ such that for any finite family $\ff$ of convex sets in $\rr^d$ of diameter one each, there is a set $K$ of diameter at least $1-\delta$ such that $K \subset \conv (\cup A)$ for at least $\lambda {{|\ff|}\choose{2d}}$ subsets $A \subset \ff$ of cardinality $2d$.  Moreover, $\lambda = \Omega_d( \delta ^{d(d-1)})$.
\end{theorem}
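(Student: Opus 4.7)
The plan is to adapt B\'ar\'any's classical proof of the first selection lemma, substituting Theorem \ref{theorem-tverberg-diameter} for the usual Tverberg theorem and Theorem \ref{theorem-colourful-caratheodory} for the usual colourful Carath\'eodory theorem. First I would set $N = |\ff|$ and pick $m$ to be the largest integer with $N \ge \lfloor 4d^2(m-1)c_\delta^{-1}\rfloor + 1$, so that $m = \Omega_d(N \delta^{(d-1)/2})$; if $N$ is so small that $m < 2d$, the conclusion is vacuous and this case can be absorbed into the constant $\lambda$. Applying Theorem \ref{theorem-tverberg-diameter} then produces a partition of $\ff$ into disjoint parts $A_1, A_2, \ldots, A_m$ together with two points $x, y \in \bigcap_{i=1}^{m} \conv(\cup A_i)$ with $|x-y| \ge 1-\delta$. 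I would then take $K = \{x, y\}$, which by construction has diameter at least $1-\delta$.

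Next, for each $2d$-element subset $I = \{i_1, \ldots, i_{2d}\} \subset \{1, \ldots, m\}$, I would regard the colour classes $S_j := \cup A_{i_j}$ as sets of points in $\rr^d$. Since $\{x, y\} \subset \conv(S_j)$ for every $j$, Theorem \ref{theorem-colourful-caratheodory} furnishes points $s_j \in S_j$ with $\{x, y\} \subset \conv\{s_1, \ldots, s_{2d}\}$. Each $s_j$ lies in some convex set $T_{i_j} \in A_{i_j}$, and the $2d$-subset $B_I := \{T_{i_1}, \ldots, T_{i_{2d}}\}$ of $\ff$ satisfies $K \subset \conv(\cup B_I)$. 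Because the parts $A_1, \ldots, A_m$ are pairwise disjoint, the assignment $I \mapsto B_I$ is injective, so this produces at least $\binom{m}{2d}$ distinct $2d$-subsets of $\ff$ whose convex hulls contain $K$.

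Finally, a standard binomial estimate gives
\[
\frac{\binom{m}{2d}}{\binom{N}{2d}} = \Omega_d\!\left(\left(\frac{m}{N}\right)^{2d}\right) = \Omega_d\!\left(c_\delta^{2d}\right) = \Omega_d\!\left(\delta^{d(d-1)}\right),
\]
which supplies the desired value of $\lambda$. I do not foresee any serious obstacle in this scheme: Theorem \ref{theorem-tverberg-diameter} supplies the two deep points, while Theorem \ref{theorem-colourful-caratheodory} packages them into a rainbow $2d$-tuple for any preselected choice of $2d$ parts. The most delicate step will be the book-keeping of the dependence on $\delta$, in particular verifying that the factor $c_\delta = \Theta_d(\delta^{(d-1)/2})$ from the Tverberg step appears with power $2d$ in the final ratio and thereby produces precisely the advertised exponent $d(d-1)$.
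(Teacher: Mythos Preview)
Your proposal is correct and follows essentially the same route as the paper: apply the diameter Tverberg theorem to obtain a partition into $m = \Theta_d(c_\delta N)$ parts whose convex hulls share a two-point set $K$, then use the colourful Carath\'eodory theorem for two points on every $2d$-tuple of parts to produce $\binom{m}{2d}$ distinct rainbow $2d$-subsets of $\ff$ containing $K$, and compare with $\binom{N}{2d}$. The only cosmetic difference is that the paper first pigeonholes to a subfamily $\ff'$ sharing a common direction $v$ and then invokes the second half of the proof of Theorem~\ref{theorem-tverberg-diameter}, whereas you invoke Theorem~\ref{theorem-tverberg-diameter} directly as a black box; since that theorem's proof performs exactly the same pigeonhole step internally, the two arguments coincide and yield the same constant $\lambda = \Omega_d(c_\delta^{2d}) = \Omega_d(\delta^{d(d-1)})$.
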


\begin{proof}
	First, there is a subfamily $\ff' \subset \ff$ of cardinality at least $c_{\delta} |\ff|$ and a direction $v$ such that the $v$-width of every member of $\ff'$ is at least $1-\delta$.  By the second part of the proof of Theorem \ref{theorem-tverberg-diameter}, there is a partition of $\ff'$ into at least $\frac{|\ff'|-1}{4d^2}+1$ parts such that the intersection of the convex hull of the union of the parts contains a set $K$ of $v$-width at least $1-\delta$.  Moreover, we may assume that $K$ is the convex hull of two points.
	 
	Colour each part by a different colour.  By the colourful Carath\'eodory theorem for two points (see section \ref{section-width}), for each $2d$-tuple of colours, there is an heterochromatic set whose convex hull contains $K$.  Thus, up to constants in the dimension there are at least
	\[
	{{\frac{|\ff'|-1}{4d^2}+1}\choose{2d}} \sim (4d)^{-2d}{{c_{\delta}|\ff|}\choose{2d}} \sim c_{\delta}^{2d}(4d)^{-2d}{{|\ff|}\choose{2d}}
	\]
	subsets $A \subset \ff$ of cardinality $2d$ such that $K \subset \conv(\cup A)$.
\end{proof}

The final ingredient needed is the existence of weak $\varepsilon$-nets for diameters.  The original results aims to find, for a given $S \subset \rr^d$, a set $T$ whose cardinality depends only on $\varepsilon$ and $d$ that intersects the convex hull of every subset of $S$ of cardinality at least $\varepsilon|S|$ \cite{Alon:2008ek}.  For our purposes we need both $S$ and $T$ to be families of sets with large diameter.

\begin{theorem}[Weak $\varepsilon$-nets for diameter]\label{theorem-weak-nets}
	Let $d$ be a positive integer, $1 > \delta > 0$, $1 > \varepsilon > 0$.  Then, there is a constant $m = m(d, \delta, \varepsilon)$ such that for any finite family $\ff$ of sets of diameter one each in $\rr^d$, there are $m$ sets $K_1, K_2, \ldots, K_m$ of diameter at least $1-\delta$ each such that for any subfamily $\ff' \subset \ff$ with $|\ff'| \ge \varepsilon|\ff|$, there is an $i$ satisfying $K_i \subset \conv (\cup \ff')$.  Moreover, $m(d, \delta, \varepsilon) = O_d (\varepsilon^{-2d}\cdot\delta^{-d(d-1)})$.
\end{theorem}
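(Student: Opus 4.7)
The approach mirrors the classical proof of weak $\varepsilon$-nets from the first selection lemma (in the spirit of Alon-B\'ar\'any-F\"uredi-Kleitman), with Theorem \ref{theorem-selection-diameter} substituted for the point version. Set $n = |\ff|$ and build the sets $K_1, K_2, \ldots$ greedily. At step $k$, assume $K_1, \ldots, K_k$ have been constructed. If every subfamily $\ff' \subset \ff$ with $|\ff'| \ge \varepsilon n$ satisfies $K_i \subset \conv(\cup \ff')$ for some $i \le k$, we stop. Otherwise, fix a \emph{bad} subfamily $\ff'_k \subset \ff$ of size at least $\varepsilon n$ whose convex hull contains none of $K_1, \ldots, K_k$, and apply Theorem \ref{theorem-selection-diameter} to $\ff'_k$ (whose members all have diameter one). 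This yields a convex set $K_{k+1}$ of diameter at least $1-\delta$, together with a collection $\Lambda_k$ of at least $\lambda\binom{|\ff'_k|}{2d}$ subsets $A \subset \ff'_k$ of cardinality $2d$, each satisfying $K_{k+1} \subset \conv(\cup A)$, where $\lambda = \Omega_d(\delta^{d(d-1)})$.

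The heart of the argument is that the families $\Lambda_k$ are pairwise disjoint across iterations. Indeed, suppose $A \in \Lambda_k \cap \Lambda_j$ with $k < j$. Since $\Lambda_j$ consists of subsets of $\ff'_j$, we have $A \subset \ff'_j$, and hence $K_{k+1} \subset \conv(\cup A) \subset \conv(\cup \ff'_j)$, contradicting the fact that $\ff'_j$ was chosen bad at step $j$. Each $\Lambda_k$ is a collection of $2d$-subsets of $\ff$, so $\sum_k |\Lambda_k| \le \binom{n}{2d}$, and the process terminates after at most
\[
m \le \frac{\binom{n}{2d}}{\lambda \binom{\varepsilon n}{2d}}
\]
iterations. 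Provided $\varepsilon n \ge 2d$, the ratio $\binom{\varepsilon n}{2d} / \binom{n}{2d}$ is bounded below by a purely dimension-dependent constant times $\varepsilon^{2d}$, so substituting the bound on $\lambda$ gives $m = O_d(\varepsilon^{-2d} \delta^{-d(d-1)})$, as claimed. The degenerate range $\varepsilon n < 2d$ can be handled trivially, since then $\ff$ has only boundedly many subfamilies of size at least $\varepsilon n$, and a set of diameter at least $1-\delta$ can be selected inside the convex hull of each using the selection lemma (or a direct Carath\'eodory argument).

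The main potential obstacle is confirming that the disjointness step is formalised correctly: one must select the witnessing $A$ carefully so that $A$ genuinely sits inside $\ff'_k$ (which is automatic from the statement of Theorem \ref{theorem-selection-diameter} applied to $\ff'_k$), and the containment $\conv(\cup A) \subset \conv(\cup \ff'_j)$ relies on $A \subset \ff'_j$ rather than on any geometric property of $K_{k+1}$. No further structural difficulty arises; the argument otherwise reduces to the binomial estimate above.
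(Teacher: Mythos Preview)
Your proof is correct and follows essentially the same greedy strategy as the paper: repeatedly find a bad $\varepsilon$-fraction, apply the selection lemma, and count the $2d$-tuples consumed. Your explicit disjointness argument for the $\Lambda_k$'s is exactly what the paper's phrase ``effectively reducing $T$ by that number'' is abbreviating, and your treatment of the edge case $\varepsilon n < 2d$ is a small bonus the paper omits.
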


\begin{proof}
	We construct the set $\mathcal{K}=\{K_1, \ldots, K_m\}$ inductively, starting with an empty set.  Let $T$ be the number of $2d$-tuples $A \subset \ff$ such that $\conv (\cup A)$ contains no set in $\mathcal{K}$.  If there is a subfamily $\ff'$ with $|\ff'| \ge \varepsilon|\ff|$ such that $\conv (\cup \ff')$ does not contain any set of $\mathcal{K}$, we can apply Theorem \ref{theorem-selection-diameter} to $\ff'$.  Thus, we can find a set $K$ contained in the convex hull of the union of at least $\lambda {{|\ff'|}\choose{2d}} \sim \lambda \varepsilon^{2d} {{|\ff|}\choose{2d}}$ different subsets $A \subset \ff$ of cardinality $2d$, effectively reducing $T$ by that number if we add $K$ to $\mathcal{K}$.  The process cannot be repeated more than $O_d\left((\lambda \varepsilon^{2d})^{-1}\right)$ times, as desired.
\end{proof}

We call $\mathcal{K}$ a diameter weak $\varepsilon$-net for the pair $(\ff , \delta)$.  At this point we have all the ingredients needed to prove Theorem \ref{theorem-p,q-diameter}.  The proof of this theorem relies on the linear programming technique by Alon and Kleitman.  For this, we need the following definitions.

We consider $\mathcal{C}_{d, \delta} = \{F \subset \rr^d: \diam (F) \ge 1- \delta, \ F \ \mbox{is convex}\}$.  Then, given a finite family of convex sets $\ff$ in $\rr^d$, we define
\begin{itemize}
		\item the diameter transversal number $\tau_{\delta} (\ff)$ as the minimum $\sum_{C \in \C_{d,\delta}} w(C)$ over all functions $w:\C_{d,\delta} \to \{0,1\}$ such that
		\[
		\sum_{{C:C \subset F, \ C \in \C_{d,\delta}}} w(C) \ge 1
		\] for all $F \in \ff$,
		\item the fractional diameter transversal number $\tau^*_{\delta} (\ff)$ as the minimum $\sum_{C \in \C_{d,\delta}} w(C)$ over all functions $w:\C_{d,\delta} \to [0,1]$ such that
		\[
		\sum_{C: C \subset F, \ C \in \C_{d,\delta}} w(C) \ge 1
		\] for all $F \in \ff$, and
		\item the fractional diameter packing number $\nu^*_k (\ff)$ as the maximum $\sum_{F \in \ff} w(F)$ for $w: \ff \to [0,1]$ such that 
		\[
		\sum_{F: C \subset F, \ F \in \ff} w(F) \le 1
		\]
		for all $C \in \C_{d,\delta}$.
		\item Given two finite families $\ff$, $\ttt$ of convex sets in $\rr^d$, we say $\ttt$ is a $(1-\delta)$-diameter transversal for $\ff$ if every set in $\ttt$ has diameter at least $1- \delta $ and every set in $\ff$ contains at least one set in $\ttt$.  Note that if $w:\mathcal{C}_{d,\delta} \to \{0,1\}$ is a function satisfying the condition of $\tau_{\delta}(\ff)$, then the family $\{K\in \mathcal{C}_{d, \delta}: w(K)=1\}$ is a $(1-\delta)$-diameter transversal of $\ff$.
		\item We refer to the conditions of Theorem \ref{theorem-p,q-diameter} as the diameter $(p,q)$ condition.
	\end{itemize}

\begin{lemma}\label{lemma-one}
	Let $\ff$ be a finite family of convex sets in $\rr^d$, all with diameter at least one.  Then, $\tau_{\delta} (\ff)$ is bounded by a function depending only $\tau^*_{\delta/2}(\ff)$, $d$ and $\delta$.
\end{lemma}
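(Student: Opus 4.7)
The plan is to adapt the linear programming technique of Alon and Kleitman to the diameter setting, using the diameter weak $\varepsilon$-net (Theorem~\ref{theorem-weak-nets}) in place of the classical one. Set $t^* = \tau^*_{\delta/2}(\ff)$.

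First I would apply linear-programming duality to the program defining $\tau^*_{\delta/2}(\ff)$. Passing to the min--max dual formulation, one obtains a probability measure $\mu$ supported on $\C_{d,\delta/2}$ satisfying
\[
\mu\bigl(\{C \in \C_{d,\delta/2} : C \subset F\}\bigr) \ge \frac{1}{t^*} \qquad \text{for every } F \in \ff.
\]
Since the primal LP has only $|\ff|$ non-trivial constraints, restricting to a basic optimal solution forces $\mu$ to have finite support; a rational approximation and a clearing of denominators then realize $\mu$ as the uniform distribution on a finite multiset $\tilde{\C} \subset \C_{d,\delta/2}$ satisfying $|\{C \in \tilde{\C} : C \subset F\}| \ge |\tilde{\C}|/t^*$ for every $F \in \ff$.

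Next I would feed $\tilde{\C}$ into Theorem~\ref{theorem-weak-nets}, viewing it as a finite family of convex sets each of diameter at least $1-\delta/2$. Since the conclusion of Theorem~\ref{theorem-weak-nets} is scale invariant, applying it with tolerance parameter $\delta/2$ and $\varepsilon = 1/t^*$ produces a family $\mathcal{K} = \{K_1, \dots, K_m\}$ of size $m = m(d, \delta/2, 1/t^*) = O_d(t^{*\,2d}\delta^{-d(d-1)})$, with each $K_i$ of diameter at least $(1-\delta/2)^2 \ge 1-\delta$, and such that for every subfamily $\tilde{\C}' \subset \tilde{\C}$ with $|\tilde{\C}'| \ge |\tilde{\C}|/t^*$ some $K_i$ satisfies $K_i \subset \conv(\cup \tilde{\C}')$. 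In particular $\mathcal{K} \subset \C_{d,\delta}$.

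Finally I would check that $\mathcal{K}$ is a $(1-\delta)$-diameter transversal of $\ff$. Fix any $F \in \ff$, and let $\tilde{\C}_F = \{C \in \tilde{\C} : C \subset F\}$; by construction $|\tilde{\C}_F| \ge |\tilde{\C}|/t^*$, so the weak $\varepsilon$-net property of $\mathcal{K}$ yields some $K_i$ with $K_i \subset \conv(\cup \tilde{\C}_F)$, and since every $C \in \tilde{\C}_F$ lies in the convex set $F$, this convex hull is contained in $F$, giving $K_i \subset F$. The step I expect to require most care is setting up the LP duality cleanly over the infinite parameter space $\C_{d,\delta/2}$ and extracting a finitely supported optimal measure $\mu$, since this is the bridge that makes the discretisation and the subsequent application of Theorem~\ref{theorem-weak-nets} legitimate; once that is done, the verification that $\mathcal{K}$ covers every $F$ is essentially automatic from the weak $\varepsilon$-net conclusion together with convexity.
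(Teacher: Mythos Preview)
Your proposal is correct and follows essentially the same route as the paper: take a (rational, finitely supported) optimal fractional transversal $w$, blow it up to a multiset $\ttt$, apply the diameter weak $\varepsilon$-net theorem with $\varepsilon = 1/\tau^*_{\delta/2}(\ff)$ and tolerance $\delta/2$, and observe that each $F\in\ff$ contains at least an $\varepsilon$-fraction of $\ttt$ and hence some $K_i\subset\conv(\cup \ttt_F)\subset F$. One small remark: you do not actually need LP duality at this step---the measure $\mu$ you describe is simply the normalized \emph{primal} optimum (and finiteness of its support follows because the primal has only $|\ff|$ constraints); duality is invoked later in the paper to identify $\tau^*_{\delta/2}$ with $\nu^*_{\delta/2}$, not in this lemma.
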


\begin{proof}
	Consider a function $w:\mathcal{C}_{d, \delta/2} \to [0,1]$ which realises $\tau^{*}_{\delta/2}$.  Without loss of generality, we may assume that $w$ has finite support and only has rational values.  Let $M$ be the common denominator of $w(K)$ for all $K \in \mathcal{C}_{d, \delta/2}$.  Let $\ttt$ be the family that formed by the disjoint union of $M\cdot w(K)$ copies of $K$, for each $K \in \mathcal{C}_{d, \delta/2}$.  Now consider $\mathcal{K}$ a diameter weak $\left(\frac{1}{\tau^*_{\delta/2}(\ff)}\right)$-net of $(\ttt, \delta/2)$, as in Theorem \ref{theorem-weak-nets}.  Notice that the diameter of every member of $\mathcal{K}$ is at least $(1-\frac{\delta}{2})^2 \ge 1-\delta$.  By the definition of $\tau^*_{\delta/2}$, for $F \in \ff$, the number of copies of sets in $\ttt$ which are contained in $F$ is at least $(\tau^*_{\delta/2} (\ff))^{-1} |\ttt|$.  Thus, there is an element of $\mathcal{K}$ contained in $F$.  In other words, $\mathcal{K}$ is a $(1-\delta)$-diameter transversal to to $\ff$, so $\tau_{\delta} (\ff) \le |\mathcal{K}|$, which in turn is only bounded by a function of $\tau^*_{\delta/2} (\ff)$, $d$ and $\delta$.
\end{proof}

\begin{lemma}\label{lemma-two}
If $p \ge q \ge 2d$ and $\ff$ is a finite family of convex sets with the diameter $(p,q)$ condition, then $\nu^*_{\delta/2} (\ff)$ is bounded by a function that depends only on $p,q,d,\delta$.
\end{lemma}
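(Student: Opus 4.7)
The plan is to show that if $\nu^*_{\delta/2}(\ff)$ were larger than a function of $p,q,d,\delta$, then via a standard blow-up and the fractional Helly theorem for diameter (Theorem \ref{theorem-fractional}) one could locate a set $K$ of diameter at least $1-\delta/2$ contained in too many members of $\ff$, violating the packing constraint. The main obstacle is that the diameter $(p,q)$ condition is not automatically inherited by the multi-family produced by the blow-up; a simple random-sampling argument is needed to circumvent this.

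Concretely, let $w:\ff \to [0,1]$ achieve $N := \nu^*_{\delta/2}(\ff)$, which one may assume is rational by standard linear programming arguments. Let $M$ be a common denominator of the values $w(F)$ and form the multi-family $\ff'$ in which each $F \in \ff$ appears $Mw(F)$ times, so that $|\ff'| = MN$. We may assume $N \ge p^2$, otherwise the desired bound already holds. The key step is to show that a fixed positive fraction $\alpha = \alpha(p,q,d)$ of the $2d$-element subsets of $\ff'$ have intersection of diameter at least one. Pick a uniformly random $p$-element subset of $\ff'$; since each $F \in \ff$ contributes at most $M$ indices to $\ff'$ while $|\ff'| = MN$, a union bound over the $\binom{p}{2}$ pairs shows the probability that two chosen indices share an underlying set of $\ff$ is at most $\binom{p}{2}/N \le 1/2$. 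On the complementary event, the diameter $(p,q)$ hypothesis applied to these $p$ distinct members of $\ff$ supplies a $q$-subtuple whose intersection has diameter at least one, and by monotonicity of diameter under inclusion every $2d$-element subset of that $q$-subtuple inherits the property. Since a uniform $2d$-subset of a uniform $p$-subset of $\ff'$ is, by exchangeability, a uniform $2d$-subset of $\ff'$, we obtain
\[
\alpha \;\ge\; \frac{1}{2}\cdot\frac{\binom{q}{2d}}{\binom{p}{2d}} \;>\; 0.
\]

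Applying Theorem \ref{theorem-fractional} to $\ff'$ with this $\alpha$ and loss $\delta/2$ yields a subfamily $\ff'' \subset \ff'$ of cardinality at least $\beta|\ff'|$ whose intersection $K$ satisfies $\diam K \ge 1 - \delta/2$, where $\beta = \beta(p,q,d,\delta)>0$. Then $K \in \mathcal{C}_{d,\delta/2}$ is contained in every member of $\ff''$, and converting multiplicities in $\ff'$ back to weights in $\ff$ gives
\[
\sum_{F \in \ff,\ K \subset F} w(F) \;=\; \frac{1}{M}\sum_{F' \in \ff',\ K \subset F'} 1 \;\ge\; \beta N.
\]
The packing constraint applied to $C = K$ forces this sum to be at most $1$, so $N \le 1/\beta$, which together with the case $N \le p^2$ completes the proof with a bound depending only on $p,q,d,\delta$.
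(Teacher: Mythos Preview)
Your argument is correct and follows the same blow-up/fractional-Helly architecture as the paper. The one place you diverge is your assertion that the multi-family need not inherit a $(p,q)$-type condition: in fact it does, since any $(p-1)(q-1)+1$ elements of $\ff'$ contain either $q$ copies of a single set (whose intersection is that set, of diameter at least one) or $p$ distinct members of $\ff$; this deterministic pigeonhole is what the paper uses, and it replaces your random-sampling step while removing the case split $N \ge p^2$.
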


\begin{proof}
	Let $w: \ff \to [0,1]$ be a function that realises $\nu^*_{\delta/2} (\ff)$.  We may assume without loss of generality that $w(C)$ is rational for all $C \in \ff$.  Let $w(C) = \frac{n_C}{m}$ where $m$ is the common denominator for all $w(C)$ for $C \in \ff$.  Let $\ff'$ be the family consisting of $n_C$ copies of $C$ for each $C \in \ff$ and let $N= |\ff'|$.  Note that $\frac{N}{m}= \sum_{C \in \ff} \frac{n_C}{m} = \nu^*_{\delta/2}(\ff)$.
	
	The family $\ff'$ satisfies the diameter $((q-1)(p-1)+1,q)$ property.  This comes immediately from the fact that every $[(q-1)(p-1)+1]$-tuple from $\ff'$ contains either $q$ copies of the same set or $p$ different sets of $\ff$.  In either case we have a $q$-tuple whose intersection has diameter at least one.  However, since $q\ge 2d$ this implies that there is a positive fraction of the $2d$-tuples of $\ff'$ whose intersection has diameter at least one.  Theorem \ref{theorem-fractional} implies then that there is a positive fraction $\beta$ depending only on $p,q,\delta$ such that there is a set $K_0$ of diameter at least $1-\frac{\delta}{2}$ contained in the intersection of at least $\beta N$ sets of $\ff'$.  Thus
	\[
	1 \ge \sum_{C \in \ff: K_0 \subset C}w(C) = \sum_{C \in \ff: K_0 \subset C} \frac{n_C}{m} \ge \frac{1}m \cdot \beta N = \beta \nu^*_{\delta/2}(\ff).
	\] 
	This implies $\nu^*_{\delta/2} (\ff) \le \frac{1}{\beta}$, as desired.
\end{proof}

\begin{proof}[Proof of Theorem \ref{theorem-p,q-diameter}]
	As in the Alon-Kleitman proof of the $(p,q)$ theorem, linear programming duality implies that $\nu_{\delta/2}^*(\ff) = \tau^*_{\delta/2}(\ff)$.  Thus, the lemmata \ref{lemma-one} and \ref{lemma-two} finish the proof.
\end{proof}

\section{Remarks and open problems}\label{section-remarks}

The Helly-type results we obtain for the diameter here improve upon their volumetric equivalent.  However, we see no reason they should not hold for the same strength in that setting. Corollary \ref{corollary-chido} seems like a good result to test for that purpose.

\begin{question}
Is there a constant $r(d, \delta)$ such that any finite family $\ff$ of convex sets in $\rr^d$ such that the intersection of every $2d$ of them has volume at least one can be partitioned into $r(d, \delta)$ parts so that the volume of the intersection of each part is at least $1-\delta$?	
\end{question}

Let us construct an example to show that the diameter loss $\delta$ is needed in Corollary \ref{corollary-chido}, Theorem \ref{theorem-p,q-diameter} and Theorem \ref{theorem-fractional}.

\begin{claim}
For any $k$, there is a family $\ff$ of $2dk+1$ convex sets in $\rr^d$ such that the intersection of any $2d$ of them has diameter at least one and for any partition of $\ff$ into $k$ parts, there is one whose intersection has diameter strictly smaller than one.
\end{claim}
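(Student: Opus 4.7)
First, I would reduce the claim to constructing a family $\ff$ of $2dk+1$ convex sets in $\rr^d$ satisfying: (a) every $2d$-subset has intersection of diameter at least $1$, and (b) every $(2d+1)$-subset has intersection of diameter strictly less than $1$.  Given such $\ff$, pigeonhole forces any $k$-partition to have a part of size at least $\lceil (2dk+1)/k\rceil = 2d+1$, which then fails by (b).  Moreover, (b) is in fact \emph{forced} by the claim: for any fixed $(2d+1)$-subset $S$, consider the partition $(S, A_2, \ldots, A_k)$ with $|A_j|=2d$ for $j\geq 2$; by (a) the parts $A_j$ have intersection of diameter $\geq 1$, so $S$ itself must be the failing part, giving $\diam(\cap S)<1$.

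For the base case $k=1$, I would use the classical B\'ar\'any--Katchalski--Pach tight example: $2d+1$ halfspaces whose intersection is a small simple polytope, with the property that any $2d$ of them have unbounded intersection.  For $d=2$ this is the five halfplanes of a small regular pentagon: any four intersect in an unbounded wedge (infinite diameter, so $\geq 1$), while all five bound a small pentagon (diameter $<1$ after suitable scaling).

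For general $k\geq 2$, a purely halfspace construction fails: realizing (a) and (b) with halfspaces would require $n=2dk+1$ unit vectors in $\rr^d$ so that every $(2d+1)$-subset positively spans $\rr^d$ while no $2d$-subset does, and a Tverberg/Radon-depth analysis shows these conditions are incompatible once $k\geq 2$ and $d\geq 2$ (the required Radon depth of the origin would exceed what $n$ points in $\rr^d$ can support).  My plan is therefore to use bounded convex bodies of the form $K_i=B_R\cap C_i$, where $B_R$ is a large ball and each $C_i$ is a convex set attached to a direction $v_i\in S^{d-1}$; the parameters $R$, the directions $v_i$, and the shapes $C_i$ are tuned jointly so that any $2d$-fold intersection preserves a segment of length at least $1$ in some direction within $B_R$, while any $(2d+1)$-fold intersection constrains every width below $1$.

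The main obstacle will be enforcing this $2d\to 2d+1$ diameter jump \emph{uniformly} over every one of the $\binom{n}{2d+1}$ subsets, not merely for the global intersection.  Executing this requires a highly symmetric placement of the directions $v_i$ on $S^{d-1}$ together with careful joint tuning of $R$ and of the thicknesses of the $C_i$'s, so that adding any single set to any $2d$-intersection strictly destroys all surviving unit segments.  Once such a family is built, the pigeonhole argument in the first paragraph completes the proof.
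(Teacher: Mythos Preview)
Your reduction in the first paragraph is correct and matches the paper's strategy: pigeonhole forces some part to have size at least $2d+1$, so it suffices to build a family in which every $2d$-fold intersection has diameter at least $1$ while every $(2d+1)$-fold intersection has diameter strictly less than $1$. However, your proposal has a genuine gap: for $k \geq 2$ you never actually construct the family. You sketch a scheme ($K_i = B_R \cap C_i$ with directions $v_i$ and parameters ``to be tuned''), correctly identify the main obstacle, and then assert that ``a highly symmetric placement'' together with ``careful joint tuning'' will work --- but no placement or tuning is ever specified, and there is no argument that this approach can be completed. As written, this is a plan, not a proof.

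The paper gives a short explicit construction that bypasses these difficulties. For each $2d$-subset $A \subset [2dk+1]$, choose a pair of antipodal points $v_A$ on the sphere $\tfrac{1}{2}S^{d-1}$ (the pairs all distinct), and set $K_i = \conv\{v_A : i \in A\}$. Then any $2d$ sets $K_{i_1}, \ldots, K_{i_{2d}}$ all contain the antipodal pair $v_{\{i_1,\ldots,i_{2d}\}}$, so their intersection has diameter at least $1$. Conversely, if $|P| \geq 2d+1$ then for every $2d$-tuple $A$ there is some $i \in P$ with $i \notin A$, hence $v_A \notin K_i$ (the only sphere points in $K_i$ are its defining vertices); thus $\cap P$ contains none of the $v_A$, lies in the open unit ball, and being compact has diameter strictly less than $1$. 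The key idea you are missing is to index the diameter-witnessing segments by the $2d$-tuples themselves rather than trying to attach one direction to each set $K_i$; this makes the uniform ``$2d \to 2d+1$ jump'' automatic instead of something that must be engineered.
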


\begin{proof}
	Let $n={{2k+1}\choose{2d}}$, and for each $2d$-tuple $A \subset \{1,2,\ldots, 2kd+1\}$, let $v_A$ be a pair of antipodal points in $\frac12S^{d-1}$.  For each $i \in \{1,2,\ldots, 2kd+1\}$, let
	\[
	K_i = \conv \left\{v_A: A \in {{[2kd+1]}\choose{2d}}, \ i \in A \right\}.
	\]
	
For any $2d$-tuple of sets, by construction their intersection contains some $v_A$, so the diameter is at least one.  Given a partition of $\{K_i : i\in [2kd+1]\}$ into $k$ parts, there must be one, say $P$, of cardinality at least $2d+1$.  For any $A \in {{[2kd+1]}\choose{2d}}$, there is an $K_i \in P$ for which $i \not\in A$, so $v_A \not\in \conv (\cap P)$.  Thus, $\cap P$ is contained in the interior of $\conv (\frac12S^{d-1})$ and is closed, so its diameter is strictly less than one.

If one wants shaper estimates, we can choose the antipodal points $v_A$ so that the circular caps $C_{\delta}  (v_A)$ are pairwise disjoint for some $\delta$, similarly to the argument of Theorem \ref{theorem-colourful-diameter}.
\end{proof}

The original conjecture by B\'ar\'any, Katchalski and Pach is still open, so we state it again to bring more attention to it.

\begin{conjecture}[B\'ar\'any, Katchalski, Pach \cite{Barany:1982ga}]
Let $\ff$ be a finite family of convex sets such that the intersection of every $2d$ of them has diameter at least one.  Then, $\diam (\cap \ff) \ge c d^{-1/2}$ for some absolute constant $c$.	
\end{conjecture}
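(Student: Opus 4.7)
The plan is to reduce, via Theorem \ref{theorem-v-width-basic-helly}, the conjecture to finding a single direction $v\in S^{d-1}$ such that every $2d$-tuple $T\subset\ff$ satisfies $w_v(\cap T)\ge cd^{-1/2}$. Indeed, applying Theorem \ref{theorem-v-width-basic-helly} with the rescaled property ``$v$-width at least $cd^{-1/2}$'' then gives $\diam(\cap\ff)\ge w_v(\cap\ff)\ge cd^{-1/2}$. For each $2d$-tuple $T$, fix a direction $v_T\in S^{d-1}$ witnessing $\diam(\cap T)\ge 1$; since $\cap T$ contains a unit segment parallel to $v_T$, we have at least the crude bound $w_v(\cap T)\ge|\langle v,v_T\rangle|$. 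The whole problem thus collapses to showing that the finite collection $\{v_T\}_T$ admits a common $v\in S^{d-1}$ with $|\langle v,v_T\rangle|\ge cd^{-1/2}$ for every $T$.

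For arbitrary finite subsets of $S^{d-1}$ this universal-direction property can certainly fail: a sufficiently dense collection of directions on $S^{d-1}$ forces some $v_T$ to be nearly orthogonal to any proposed $v$. So the proof must exploit the specific way the $v_T$ arise from a common Helly family. Crucially, $v_T$ and $v_{T'}$ for overlapping tuples come from segments living inside $\cap(T\cup T')$, and this should constrain $\{v_T\}$ far more rigidly than if the underlying convex sets were unrelated.

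I would attempt an iterative/amortized argument. Start with a candidate direction $v_0$, for instance the longest axis of the John ellipsoid of a suitably chosen subintersection. If every $T$ already satisfies $|\langle v_0,v_T\rangle|\ge cd^{-1/2}$, we are done; otherwise, rotate $v_0$ towards a badly-aligned $v_T$ by a controlled angle, tracking a potential such as an $\ell_p$ norm of $1/|\langle v,v_T\rangle|$ over all tuples. The essential step is to use the geometric coupling: rotating toward $v_T$ can hurt the projection onto $v_{T'}$ only if $v_T$ and $v_{T'}$ are themselves nearly orthogonal, but the two realizing segments both lie inside $\cap(T\cap T')$, and any mutual orthogonality must be charged against a quantitative width bound for this common intersection (which itself enjoys a $v$-width Helly guarantee from Theorem \ref{theorem-v-width-basic-helly}).

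The main obstacle, and surely the reason this conjecture has resisted proof since 1982, is closing the amortization loop: even with the geometric coupling above, turning a Helly-type combinatorial guarantee into a uniform spectral statement about $\{v_T\}$ at the sharp $d^{-1/2}$ scale appears to demand essentially new geometric input. An alternative route I would try in parallel is by contradiction: if $\diam(\cap\ff)<cd^{-1/2}$ then $\cap\ff$ is contained in a slab of width $<cd^{-1/2}$ in every direction, while for each $T$ the unit segment realizing $\diam(\cap T)$ escapes this slab by roughly $1-cd^{-1/2}$; quantifying the resulting configuration of escaping segments, across all $2d$-tuples simultaneously, by a covering-number or mixed-volume estimate on $S^{d-1}$, is the crux where I expect the argument to stand or fall.
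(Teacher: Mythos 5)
This statement is not a theorem of the paper: it is the Bárány--Katchalski--Pach conjecture, explicitly stated as still open, with the best known bound on $\diam(\cap \ff)$ of order $d^{-2d}$ rather than $d^{-1/2}$. So there is no proof in the paper to compare against, and your text, by your own admission, is a strategy sketch rather than a proof: the "amortization loop'' and the covering/mixed-volume alternative are precisely the steps you leave open, and they are where the entire difficulty lies.

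Two concrete remarks on the sketch itself. First, your reduction via Theorem \ref{theorem-v-width-basic-helly} to finding a single direction $v$ with $w_v(\cap T)\ge cd^{-1/2}$ for every $2d$-tuple $T$ is correct but gains nothing: if the conjecture holds, then $\cap\ff$ itself contains a segment of length $cd^{-1/2}$, so every tuple has width at least $cd^{-1/2}$ in that direction; hence the existence of such a universal direction is essentially equivalent to the conjecture, not a simpler intermediate target. Second, the "geometric coupling'' you invoke is not quantified: the realizing segments for $T$ and $T'$ lie in $\cap T$ and $\cap T'$ (note they contain $\cap(T\cup T')$, not the reverse, so your phrase "living inside $\cap(T\cup T')$'' is backwards), and a convex body can perfectly well contain two nearly orthogonal unit segments, so near-orthogonality of $v_T$ and $v_{T'}$ cannot by itself be "charged'' against any width bound. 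Absent a workable replacement for these steps, the proposal does not establish the statement, which remains open.
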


For the conjecture above, the best lower bound on $\diam (\cap \ff)$ is $O(d^{-2d})$ \cite{Barany:1982ga}.

\bibliographystyle{amsalpha}

\bibliography{references.bib}

\noindent Pablo Sober\'on \\
\textsc{
Mathematics Department \\
Northeastern University \\
Boston, MA 02115, USA
}\\[0.1cm]

\noindent \textit{E-mail address: }\texttt{p.soberonbravo@neu.edu}

\end{document}